\documentclass[11pt]{article}

\usepackage{amsmath,amssymb,amsfonts,textcomp,amsthm,xifthen,psfrag,graphicx,color,MnSymbol}
\usepackage[T1]{fontenc}

\usepackage{hyperref}

\date{}

\oddsidemargin 0.5cm
\textwidth     16cm
\textheight    20cm

\newtheorem{theorem}{Theorem}
\newtheorem{lemma}[theorem]{Lemma}

\newtheorem{prop}[theorem]{Proposition}
\newtheorem{remark}[theorem]{Remark}
\theoremstyle{definition} 







\newcommand{\<}{\langle{}}
\renewcommand{\>}{\rangle}

\newcommand{\ip}[2]{\llangle#1\hspace*{.5mm},#2\rrangle}
\newcommand{\dual}[2]{\<#1\hspace*{.5mm},#2\>}
\newcommand{\vdual}[2]{(#1\hspace*{.5mm},#2)}

\newcommand{\norm}[2]{\|#1\|_{#2}}

\newcommand{\diam}{\mathrm{diam}}

\newcommand{\wat}{\widehat}
\newcommand{\transp}{\mathsf{T}}

\def\Grad{\boldsymbol{\varepsilon}}

\def\Div{{\rm\bf div\,}}

\def\grad{\nabla}

\def\pwDelta{\Delta_\cT}

\def\TTheta{\mathbf{\Theta}}

\newcommand{\LL}{\ensuremath{\mathbb{L}}}

\def\tq{\wat{\boldsymbol{q}}}

\def\tu{\wat{\boldsymbol{u}}}
\def\tsigma{\wat{\boldsymbol{\sigma}}}
\def\tv{\wat{\boldsymbol{v}}}
\def\tz{\wat{\boldsymbol{z}}}

\newcommand{\UU}{\ensuremath{\mathcal{U}}}
\newcommand{\VV}{\ensuremath{\mathcal{V}}}

\newcommand{\vv}{\ensuremath{\mathbf{v}}}
\newcommand{\uu}{\ensuremath{\mathbf{u}}}
\newcommand{\ww}{\ensuremath{\mathbf{w}}}

\newcommand{\deltav}{\delta\!v}
\newcommand{\deltaz}{\delta\!z}

\newcommand{\diagtensor}{\mathbb{D}}

\newcommand{\traceD}[1]{\mathrm{tr}_{#1}^{\Delta}} 
\newcommand{\traceDD}[1]{\mathrm{tr}_{#1}^{\mathrm{dDiv}}} 
\newcommand{\traceDt}[1]{\mathrm{tr}_{#1}^{\Delta,2}}
\newcommand{\tracetD}[1]{\mathrm{tr}_{#1}^{2,\Delta}} 

\newcommand{\bH}{\ensuremath{\mathbf{H}}}
\newcommand{\bHD}[1]{\ensuremath{\mathbf{H}^\Delta(#1)}}
\newcommand{\bHDzz}[1]{\ensuremath{\mathbf{H}_{00}^\Delta(#1)}}
\newcommand{\bHDt}[1]{\ensuremath{\mathbf{H}^{\Delta,2}(#1)}}
\newcommand{\bHDtzz}[1]{\ensuremath{\mathbf{H}_{00}^{\Delta,2}(#1)}}

\newcommand{\cD}{\ensuremath{\mathcal{D}}}

\newcommand{\trD}[1]{{\mathrm{\Delta},#1}}
\newcommand{\trt}[1]{{2,#1}}
\newcommand{\trddiv}[1]{{\mathrm{dDiv},#1}}

\def\div{{\rm div\,}}

\newcommand{\ttt}{{\rm T}}

\newcommand{\HD}[1]{{H(\Delta,#1)}}
\newcommand{\HdDiv}[1]{{H(\div\Div\!,#1)}}
\newcommand{\HDz}[1]{{H_0(\Delta,#1)}}

\newcommand{\di}{d}
\newcommand{\R}{\ensuremath{\mathbb{R}}}

\newcommand{\nn}{\ensuremath{\mathbf{n}}}

\newcommand{\cT}{\ensuremath{\mathcal{T}}}

\newcommand{\cS}{\ensuremath{\mathcal{S}}}

\newcommand{\OO}{\ensuremath{\mathcal{O}}}


\title{Trace operators of the bi-Laplacian and applications
\thanks{Supported by CONICYT through FONDECYT projects 11170050, 1190009.}}
\author{
Thomas~F\"uhrer\thanks{
Facultad de Matem\'aticas, Pontificia Universidad Cat\'olica de Chile,
Avenida Vicu\~na Mackenna 4860, Santiago, Chile,
email: {\tt \{tofuhrer,nheuer\}@mat.uc.cl}}
\and
Alexander Haberl\thanks{
Institute for Analysis and Scientific Computing,
Technische Universit\"at Wien, Austria,
email: {\tt alexander.haberl@asc.tuwien.ac.at}}
\and
Norbert Heuer$^\dagger$
}

\begin{document}
\maketitle
\begin{abstract}

We study several trace operators and spaces that are related to the bi-Laplacian.
They are motivated by the development of ultraweak formulations for the bi-Laplace equation with
homogeneous Dirichlet condition, but are also relevant to describe conformity of mixed approximations.

Our aim is to have well-posed (ultraweak) formulations that assume low regularity,
under the condition of an $L_2$ right-hand side function.
We pursue two ways of defining traces and corresponding integration-by-parts formulas.
In one case one obtains a non-closed space. This can be fixed by switching to the
Kirchhoff--Love traces from
[F\"uhrer, Heuer, Niemi,
 An ultraweak formulation of the Kirchhoff--Love plate bending model and DPG approximation,
 {\em Math. Comp.}, 88 (2019)].
Using different combinations of trace operators we obtain two well-posed formulations.
For both of them we report on numerical experiments with the DPG method and optimal test functions.

In this paper we consider two and three space dimensions. However, with the exception of
a given counterexample in an appendix (related to the non-closedness of a trace space), our
analysis applies to any space dimension larger than or equal to two.

\bigskip
\noindent
{\em Key words}: bi-Laplacian, biharmonic operator, trace operator,
                 fourth-order elliptic PDE, ultraweak formulation,
                 discontinuous Petrov--Galerkin method, optimal test functions

\noindent
{\em AMS Subject Classification}:
35J35, 
74K20, 
74S05, 
65N30, 
35J67  
\end{abstract}

\section{Introduction}

The bi-Laplace operator and biharmonic functions have generated sustained interest in the mathematics
community until today. Just in numerical analysis, MathSciNet reports well beyond 500 publications
with these key words in their titles. An early overview of numerical methods for the Dirichlet
problem of the bi-Laplacian is given by Glowinski and Pironneau in \cite{GlowinskiP_79_NMF}.
A more recent discussion can be found in the introduction of \cite{CockburnDG_09_HSD}.

Our interest in this operator arose while studying the Kirchhoff--Love plate bending model
and its numerical approximation by the discontinuous Petrov--Galerkin method with optimal test
functions (DPG method). It is well known that the Kirchhoff--Love model (with constant coefficients)
reduces to the bi-Laplace equation when considering the deflection of the plate as the only unknown.

In this paper we introduce and analyze trace operators that stem from the bi-Laplacian
and relate to integration-by-parts formulas. Such operators are of general interest as they
characterize interface conditions for (piecewise) sufficiently smooth functions to be globally
in the domain of the bi-Laplacian, or the subordinated Laplacian when considering the Laplacian
of the unknown as independent unknown.
Specifically, this analysis is required to construct conforming
finite element spaces of minimal regularity.
Regularity is a delicate issue when splitting the bi-Laplace equation into two Laplace equations
(explicitly, or implicitly through a mixed formulation).
Early papers on this technique are by Ciarlet and Raviart \cite{CiarletR_74_MFE},
and Monk \cite{Monk_88_IFE}. Regularity issues at corners have been analyzed, e.g., in
\cite{GerasimovSS_12_CGP,DeCosterNS_15_SBD}.
Thus, our aim is to use the least possible regularity subject to a given right-hand side
function in $L_2$. We note that Zulehner \cite{Zulehner_15_CRM} presents a formulation
(and space) where less regular right-hand side functions are permitted.

We consider Dirichlet boundary conditions, that is, a clamped plate
in the two-dimensional case. Here we only note that, in principle, it is possible to study
different boundary conditions, but the regularity of solutions will depend on them and some technical
details can be tricky.

In the rest of this paper we motivate our definitions and analysis by requirements for
the DPG method. For instance, the right-hand side function to be in $L_2$ is such a requirement.
Considering this method, there are good reasons to use ultraweak variational formulations.
From the mathematical point of view they simplify the analysis of well-posedness as they
allow for exact representations of adjoint operators, cf.~\cite{DemkowiczG_11_ADM}.
From a practical point of view they give access to approximations of field variables
that are close to optimal in the $L_2$ sense, cf., e.g.,~\cite{DemkowiczH_13_RDM,HeuerK_17_RDM}
for singularly perturbed problems. For general second order elliptic problems,
the $L_2$-optimality up to higher order terms is proved in \cite{Fuehrer_SDM}.
Now, since field variables of ultraweak formulations are only $L_2$-elements, the inherent regularity
of the underlying problem is passed onto appearing traces. Therefore, the study of trace spaces
is at the heart of proving well-posedness of ultraweak formulations. As explained before, the appearing
traces (and trace spaces) are equally relevant for the underlying problem and other variational
formulations as they precisely describe the notion of conformity and represent tools for its study.

It is the nature of DPG methods to use product test spaces (defined on meshes).
This is a fundamental paradigm proposed by Demkowicz and Gopalakrishnan in \cite{DemkowiczG_11_CDP}.
For that reason, our traces will live in product spaces related to the boundaries of elements.
Nevertheless, our results will apply to operations on domains without mesh, simply by using
meshes that consist of a single element.

The remainder of this paper is as follows.
In the next section we fix our model problem and introduce a setting needed to develop
ultraweak variational formulations. This approach motivates the framework in which we study
trace and jump operators, and trace spaces, and is presented in \S\ref{sec_traces_jumps}.
Aiming at lowest regularity, we first develop a setting where the unknown $u$ of the bi-Laplace equation
and its Laplacian (as independent unknown) are considered as elements of the same regularity
($L_2$-functions whose Laplacian is in $L_2$). This is done in \S\ref{sec_trace1}.
Later, in \S\ref{sec_VF2}, we present a variational formulation based on this framework,
state its well-posedness and equivalence with the model problem (Theorem~\ref{thm_stab1}),
and prove the quasi-optimal convergence of the induced DPG scheme (Theorem~\ref{thm_DPG1}).
For this formulation, discrete subspaces with good approximation properties seem
to require coupled basis functions (trace components are not independent).
This limits the practicality of the induced DPG scheme.
We therefore also consider the option of using more regular test functions (then trace components
can be approximated separately). This change gives rise to different trace operators
($\traceDt{}$ acting on $u$, and $\tracetD{}$ acting on $\sigma=\Delta u$) and spaces.
They are studied in \S\ref{sec_trace2}. Unfortunately, it turns out that the image of
$\tracetD{}$ is not closed (this is proved in Appendix~\ref{sec_app1}).
We therefore embed this space in a larger, closed trace space known from our
Kirchhoff--Love traces studied in \cite{FuehrerHN_19_UFK}.
The corresponding variational formulation and DPG scheme are presented in \S\ref{sec_VF2},
stating well-posedness and quasi-optimal convergence by Theorems~\ref{thm_stab2} and~\ref{thm_DPG2},
respectively. Proofs of Theorems~\ref{thm_stab1} and~\ref{thm_stab2} are given in \S\ref{sec_proofs}.
We do not provide a discrete analysis here, but we do present some numerical experiments in \S\ref{sec_num}.
They illustrate expected convergence properties.

One conclusion of our analysis is that the solution $u$ to the bi-Laplace
equation with right-hand side function in $L_2$ and homogeneous boundary condition satisfies $u\in H^2$.
Since we have not seen this result in the literature, we resume and prove this statement
in Appendix~\ref{sec_reg}.
There are, however, related $H^2$-regularity results for the bi-Laplacian by Girault and Raviart
in \cite[\S{5}]{GiraultR_86_FEM} (dimensions $2$ and $3$),
and by De~Coster \emph{et al.} \cite{DeCosterNS_15_SBD} for corner-type domains in $\R^2$.

Throughout the paper, $a\lesssim b$ means that $a\le cb$ with a generic constant $c>0$ that is independent of
the underlying mesh (except for possible general restrictions like shape-regularity of elements).
Similarly, we use the notation $a\simeq b$.

\section{Model problem} \label{sec_model}

Let $\Omega\subset\R^\di$ ($\di\in\{2,3\}$) be a bounded simply connected Lipschitz domain.
(We remark that our analysis and results will apply to any space dimension $\di\ge 2$,
with the exception of Lemma~\ref{la_trtD} with respect to the Dirac distributions and
the counterexample of Appendix~\ref{sec_app1}.
Nevertheless, we restrict ourselves to $\di\in\{2,3\}$ since we will make use of some results
from \cite{FuehrerHN_19_UFK} which are true in any space dimension $\di\ge 2$,
but are only formulated for $\di\in\{2,3\}$.)

The boundary of $\Omega$ is denoted by $\Gamma=\partial\Omega$ with exterior unit normal vector $\nn$.
For given $f\in L_2(\Omega)$ our model problem is
\begin{subequations} \label{prob}
\begin{alignat}{3}
    \Delta^2 u  &= f  && \quad\text{in}\ \Omega\label{p1},\\
    u = \partial_\nn u &= 0 && \quad\text{on}\ \Gamma.\label{p2}
\end{alignat}
\end{subequations}
We intend to develop an ultraweak formulation of \eqref{prob} with product test spaces.
To this end we consider a mesh $\cT$ that consists of general non-intersecting Lipschitz elements.
To the mesh $\cT=\{T\}$ we assign the skeleton $\cS=\{\partial T;\;T\in\cT\}$.

Introducing $\sigma:=\Delta u$,
we test the two equations $\Delta\sigma=f$, $\Delta u-\sigma=0$ on any $T\in\cT$ by
sufficiently smooth functions $v$ and $\tau$, respectively, and integrate by parts twice.
This formally gives
\[
    \vdual{\sigma}{\Delta v}_T
   +\dual{\partial_\nn\sigma}{v}_{\partial T}
   -\dual{\sigma}{\partial_\nn v}_{\partial T}
   +\vdual{u}{\Delta\tau}_T + \dual{\partial_\nn u}{\tau}_{\partial T} - \dual{u}{\partial_\nn\tau}_{\partial T}
   -\vdual{\sigma}{\tau}_T
     = \vdual{f}{v}_T,
\]
where $\vdual{\cdot}{\cdot}_T$ denotes the $L_2(T)$-duality. We still have to interpret
the dualities on $\partial T$ denoted by $\dual{\cdot}{\cdot}_{\partial T}$.
Summing over $T\in\cT$, we obtain, again formally,
\begin{align} \label{VFa}
   &\vdual{u}{\Delta \tau}_\cT + \vdual{\sigma}{\Delta v-\tau}_\cT
   \nonumber\\
   &
   + \sum_{T\in\cT} \dual{\partial_\nn\sigma}{v}_{\partial T}
   - \sum_{T\in\cT} \dual{\sigma}{\partial_\nn v}_{\partial T}
   + \sum_{T\in\cT} \dual{\partial_\nn u}{\tau}_{\partial T}
   - \sum_{T\in\cT} \dual{u}{\partial_\nn \tau}_{\partial T}
   = \vdual{f}{v}.
\end{align}
Here and in the following, $\vdual{\cdot}{\cdot}_\cT$ denotes the $L_2$-duality in the product space $L_2(\cT)$,
meaning that appearing differential operators are taken piecewise with respect to $T\in\cT$.
Below, we also use the notation of differential operators with index $\cT$ to indicate piecewise operations, e.g.,
$\vdual{\pwDelta u}{v}=\vdual{\Delta u}{v}_\cT$.
Furthermore, from now on, $\nn$ denotes a generic unit normal vector on $\partial T$ ($T\in\cT$) and $\Gamma$,
pointing outside $T$ and $\Omega$, respectively.

Before returning to our formulation \eqref{VFa} we need to study trace operators
to give a meaning to the skeleton dualities appearing in \eqref{VFa}.
This will be done next, before returning to \eqref{VFa} in \S\ref{sec_VF1}, and again in \S\ref{sec_VF2}.

\section{Traces and jumps} \label{sec_traces_jumps}

\subsection{Spaces and norms}

Given $T\in\cT$, and sufficiently smooth scalar (respectively, symmetric tensor) function $z:\;T\to\R$
(respectively, $\TTheta:\;T\to\R^{d\times d}$), we define the norms
$\|\cdot\|_{\Delta,T}$, $\|\cdot\|_{2,T}$ and $\|\cdot\|_\trddiv{T}$ by
\begin{align*}
   \|z\|_{\Delta,T}^2 &:= \|z\|_T^2 + \|\Delta z\|_T^2,\quad
   \|z\|_{2,T}^2 := \|z\|_T^2 + \|\Grad\grad z\|_T^2,\quad
   \|\TTheta\|_\trddiv{T}^2 := \|\TTheta\|_T^2 + \|\div\Div\TTheta\|_T^2.
\end{align*}
Here, $\|\cdot\|_T$ is the $L_2(T)$-norm (for scalar and tensor-valued functions),
$\Grad(\cdot):=\frac 12(\grad(\cdot)+\grad(\cdot)^\transp)$ denotes the symmetric gradient,
that is, $\Grad\grad z$ is the Hessian of $z$, $\div\!$ is the standard divergence operator, and
$\Div\!$ is the divergence applied row-wise to tensors.
Analogously, we use the corresponding norms on $\Omega$
where we drop the index $T$. For instance, $\|\cdot\|$ is the $L_2(\Omega)$-norm.
We also need the $L_2(\Omega)$-bilinear form $\vdual{\cdot}{\cdot}$, for scalar and tensor functions.

We define the spaces $\HD{T}$ and $H^2(T)$ as the closures of $\cD(\overline{T})$ with respect to
the norms $\|\cdot\|_{\Delta,T}$ and $\|\cdot\|_{2,T}$, respectively. Correspondingly,
$\HdDiv{T}$ is the closure of the space of smooth symmetric tensors on $T$ with respect to
$\|\cdot\|_\trddiv{T}$.
Analogously, $\HD{\Omega}$ and $H^2_0(\Omega)$ are the respective closures of $\cD(\overline{\Omega})$
and $\cD(\Omega)$, with norms $\|\cdot\|_{\Delta}$ and $\|\cdot\|_2$,
and $\HdDiv{\Omega}$ is the closure with respect to $\|\cdot\|_\mathrm{dDiv}$
of the space of smooth symmetric tensors on $\Omega$.

Given the mesh $\cT$, we will need the induced product spaces
\begin{align*}
   \HD{\cT} &:= \{z\in L_2(\Omega);\; z|_T\in \HD{T}\ \forall T\in\cT\},\\
   H^2(\cT) &:= \{z\in L_2(\Omega);\; z|_T\in H^2(T)\ \forall T\in\cT\},\\
   \HdDiv{\cT} &:= \{\TTheta\in \LL_2^s(\Omega);\; \TTheta|_T\in\HdDiv{T}\ \forall T\in\cT\}
\end{align*}
with canonical product norms $\|\cdot\|_{\Delta,\cT}$, $\|\cdot\|_{2,\cT}$, and $\|\cdot\|_\trddiv{\cT}$,
respectively. Here, $\LL_2^s$ indicates the space of symmetric $L_2$-tensors on the indicated domain.

\subsection{Traces and jumps, part one} \label{sec_trace1}

We define linear operators $\traceD{T}:\;\HD{T}\to \HD{T}'$ for $T\in\cT$ by
\begin{align} \label{trDT}
   \dual{\traceD{T}(z)}{v}_{\partial T} := \vdual{\Delta v}{z}_T - \vdual{v}{\Delta z}_T
   \quad\forall v\in \HD{T},
\end{align}
and observe that, for sufficiently smooth functions $v$ and $z$,
\begin{equation} \label{trDT_classical}
   \dual{\traceD{T}(z)}{v}_{\partial T}
   =
   \dual{z}{\partial_\nn v}_{\partial T} - \dual{v}{\partial_\nn z}_{\partial T}
\end{equation}
with $L_2(\partial T)$-duality $\dual{\cdot}{\cdot}_{\partial T}$ and standard trace and
normal derivative.
In other words, the trace operator $\traceD{T}$ can deliver standard traces
(trace and normal derivative) on $\partial T$ when diverting from the setting as a map
from $\HD{T}$ to its dual. This will be further discussed in \S\ref{sec_trace2} below.
Note the duality
\[
   \dual{\traceD{T}(z)}{v}_{\partial T} = - \dual{\traceD{T}(v)}{z}_{\partial T}
   \quad\forall z,v\in\HD{T}.
\]
The range of $\traceD{T}$ is
\[
   \bHD{\partial T} := \traceD{T}(\HD{T}\quad (T\in\cT).
\]
Switching from individual elements $T\in\cT$ to the whole of $\cT$, a collective trace operator
is defined by
\[
   \traceD{}:\;
   \left\{\begin{array}{cll}
      \HD{\Omega} & \to & \HD{\cT}',\\
      z & \mapsto & \traceD{}(z) := (\traceD{T}(z))_T
   \end{array}\right.,
\]
with duality
\begin{align}
    \label{trD_duality}
    \dual{\traceD{}(z)}{v}_\cS &:= \sum_{T\in\cT} \dual{\traceD{T}(z)}{v}_{\partial T}
    \quad(z\in\HD{\Omega},\ v\in \HD{\cT}).
\end{align}
To define a trace space that reflects the homogeneous boundary condition under consideration,
we make use of the operator $\traceD{\Omega}$ that is defined like $\traceD{T}$ by replacing $T$ with $\Omega$:
\begin{align} \label{trace_Omega}
   \dual{\traceD{\Omega}(z)}{v}_\Gamma := \vdual{z}{\Delta v} - \vdual{\Delta z}{v}
   \quad (z,v\in\HD{\Omega}).
\end{align}
Then, with
\[
   \HDz{\Omega} := \ker(\traceD{\Omega}),
\]
we introduce the product trace spaces
\[
   \bHDzz{\cS}  := \traceD{}(\HDz{\Omega})
   \ \subset\ 
   \bHD{\cS}    := \traceD{}(\HD{\Omega})
   \ \subset\ 
   \HD{\cT}'.
\]
Below, we refer to elements of such skeleton trace spaces in the form, e.g., $\tv=(\tv_T)_{T\in\cT}$.

The local and global trace spaces are equipped with the canonical trace norms,
\begin{align*}
   &\|\tv\|_\trD{\partial T} = \inf\{\|v\|_{\Delta,T};\; v\in \HD{T},\ \traceD{T}(v)=\tv\}
   \quad (\tv\in\bHD{\partial T},\ T\in\cT),
   \\
   &\|\tv\|_\trD{\cS} = \inf\{\|v\|_\Delta;\; v\in \HD{\Omega},\ \traceD{}(v)=\tv\}
   \qquad (\tv\in\bHD{\cS}\cup\bHDzz{\cS}).
\end{align*}
(Obviously, $\bHDzz{\cS}$ is a subspace of $\bHD{\cS}$. But here, and in some instances below,
we write $\bHD{\cS}\cup\bHDzz{\cS}$ to stress the fact that both spaces are furnished with
the same norm.)
Alternative norms are defined by duality,
\begin{align*}
   \|\tv\|_{\Delta',\partial T} &:= 
   \sup_{0\not=z\in\HD{T}} \frac{\dual{\tv}{z}_{\partial T}}{\|z\|_{\Delta,T}}
   \quad (\tv\in \bHD{\partial T},\ T\in\cT),
   \\
   \|\tv\|_{\Delta',\cS} &:= 
   \sup_{0\not=z\in\HD{\cT}} \frac{\dual{\tv}{z}_{\cS}}{\|z\|_{\Delta,\cT}}
   \quad (\tv\in \bHD{\cS}\cup\bHDzz{\cS}).
\end{align*}
Here, the dualities on $\partial T$ and $\cS$ are given by the corresponding trace
operations, \eqref{trDT} for the local spaces and \eqref{trD_duality} on $\cS$.
For instance, the duality between $\tv\in\bHD{\partial T}$ and $z\in\HD{T}$ is
$\dual{\tv}{z}_{\partial T}=\vdual{\Delta z}{v}_T-\vdual{z}{\Delta v}_T$ with arbitrary
$v\in\HD{T}$ such that $\traceD{T}(v)=\tv$.

\begin{lemma} \label{la_tr_unity}
It holds the identity
\[
   \|\tz\|_{\Delta',\partial T} = \|\tz\|_\trD{\partial T}\quad
   \forall \tz\in \bHD{\partial T},\ T\in \cT,
\]
so that
\[
   \traceD{T}:\; \HD{T}\to \bHD{\partial T}
\]
has unit norm and $(\bHD{\partial T},\|\cdot\|_{\Delta',\partial T})$ is closed.
\end{lemma}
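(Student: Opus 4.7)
The plan is to prove the identity by Cauchy--Schwarz for one direction and by constructing the Riesz representer of $\tz$ explicitly for the other. The inequality $\|\tz\|_{\Delta',\partial T}\le\|\tz\|_\trD{\partial T}$ is immediate: for any preimage $v\in\HD{T}$ of $\tz$, the formula $\dual{\tz}{z}_{\partial T}=\vdual{\Delta z}{v}_T-\vdual{z}{\Delta v}_T$ combined with Cauchy--Schwarz in $\R^2$ gives $|\dual{\tz}{z}_{\partial T}|\le\|z\|_{\Delta,T}\|v\|_{\Delta,T}$, and taking the infimum over $v$ yields the bound.

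For the converse, let $K:=\ker\traceD{T}$, which is closed in $\HD{T}$ by continuity of $\traceD{T}$, and let $v$ be the unique preimage of $\tz$ lying in the orthogonal complement $K^\perp$ with respect to the canonical inner product $\vdual{\cdot}{\cdot}_T+\vdual{\Delta\cdot}{\Delta\cdot}_T$. By orthogonal decomposition $\|v\|_{\Delta,T}=\|\tz\|_\trD{\partial T}$. The key regularity fact I will establish is that $v$ solves the distributional equation $\Delta^2 v=-v$ on $T$. This comes from the observation that $\cD(T)\subset K$ (for $k\in\cD(T)$ and $z\in\HD{T}$, distributional integration by parts yields $\vdual{\Delta z}{k}_T=\vdual{z}{\Delta k}_T$, so $\traceD{T}(k)=0$). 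Orthogonality of $v$ then gives $\vdual{v}{k}_T+\vdual{\Delta v}{\Delta k}_T=0$ for all $k\in\cD(T)$, and a further distributional integration by parts $\vdual{\Delta v}{\Delta k}_T=\vdual{v}{\Delta^2 k}_T$ turns this into $\vdual{v}{k+\Delta^2 k}_T=0$, whence $v+\Delta^2 v=0$ in $\cD'(T)$. Since $v\in L_2(T)$, this yields $\Delta^2 v=-v\in L_2(T)$ and hence $\Delta v\in\HD{T}$.

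Equipped with this regularity, a direct computation using $\Delta(-\Delta v)=v$ shows that $-\Delta v\in\HD{T}$ satisfies
\[
   \vdual{-\Delta v}{z}_T+\vdual{\Delta(-\Delta v)}{\Delta z}_T = \vdual{v}{\Delta z}_T-\vdual{\Delta v}{z}_T = \dual{\tz}{z}_{\partial T}\quad\forall z\in\HD{T},
\]
so $-\Delta v$ is the Riesz representer in $\HD{T}$ of the functional $\dual{\tz}{\cdot}_{\partial T}$. Therefore
\[
   \|\tz\|_{\Delta',\partial T}=\|-\Delta v\|_{\Delta,T}=\bigl(\|\Delta v\|_T^2+\|v\|_T^2\bigr)^{1/2}=\|v\|_{\Delta,T}=\|\tz\|_\trD{\partial T},
\]
proving the identity. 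The unit operator norm of $\traceD{T}\colon\HD{T}\to\bHD{\partial T}$ follows because Cauchy--Schwarz gives norm at most $1$ while equality is attained on $K^\perp$. Closedness of $(\bHD{\partial T},\|\cdot\|_{\Delta',\partial T})$ in $\HD{T}'$ is a corollary: the quotient norm $\|\cdot\|_\trD{\partial T}$ makes $\HD{T}/K$ complete, and the established norm identity transports this completeness to the dual-norm topology on $\bHD{\partial T}$. The main technical hurdle is the distributional argument giving $\Delta^2 v+v=0$, since without the regularity $\Delta^2 v\in L_2(T)$ that this provides, one could not form $-\Delta v\in\HD{T}$ and identify it as the Riesz representer.
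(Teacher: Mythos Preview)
Your proof is correct and follows essentially the same approach as the paper's: both hinge on constructing a function satisfying $\Delta^2 v+v=0$ on $T$ and exploiting that $\pm\Delta$ interchanges the minimum-norm preimage of $\tz$ and the Riesz representer of $\dual{\tz}{\cdot}_{\partial T}$. The only difference is organizational: you start from the minimum-norm preimage $v\in K^\perp$ and identify $-\Delta v$ as the Riesz representer, whereas the paper starts from the Riesz representer and then shows that $\Delta$ of it is the minimum-norm preimage; your ordering is arguably more streamlined since it avoids the auxiliary variational problem the paper sets up for $z$.
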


\begin{proof}
The proof is essentially identical to the one of Lemma~3.2 in \cite{FuehrerHN_19_UFK}. We just need to replace
spaces, operators and norms by the ones used here. For the convenience of the reader we repeat the proof.

The estimate $\|\tz\|_{\Delta',\partial T}\le \|\tz\|_\trD{\partial T}$ is due to the boundedness
\begin{align*}
   \dual{\traceD{T}(z)}{v}_{\partial T} &\le \|z\|_{\Delta,T} \|v\|_{\Delta,T}
   \quad\forall z,v\in\HD{T},\ T\in\cT.
\end{align*}
To show the other direction we consider an element $T\in\cT$ and $\tz\in\bHD{\partial T}$,
and define $v\in\HD{T}$ by solving
\begin{align} \label{prob_dd_z}
   \vdual{\Delta v}{\Delta \deltav}_T + \vdual{v}{\deltav}_T
   =
   \dual{\tz}{\deltav}_{\partial T}
   \quad\forall \deltav\in\HD{T}.
\end{align}
One deduces that
\begin{align} \label{pde_dd_z}
   \Delta^2 v + v = 0\quad\text{in}\ L_2(T).
\end{align}
We then define $z\in\HD{T}$ as the solution to
\begin{align} \label{prob_dd_QQ}
   \vdual{\Delta z}{\Delta\deltaz}_T + \vdual{z}{\deltaz}_T
   =
   \dual{\traceD{T}(\deltaz)}{v}_{\partial T}
   \quad\forall\deltaz\in \HD{T}.
\end{align}
Again, it holds
\begin{align} \label{pde_dd_QQ}
   \Delta^2 z + z = 0\quad\text{in}\ L_2(T).
\end{align}
Let us show that $z=\Delta v$. To this end we define $z^*:=\Delta v$ and find that
\(
   \Delta z^* = -v,
\)
cf.~\eqref{pde_dd_z}.
Using this relation, and the definitions of $z^*$ and $\traceD{T}$, cf.~\eqref{trDT}, we obtain
\begin{align*}
   \vdual{\Delta z^*}{\Delta\deltaz}_T + \vdual{z^*}{\deltaz}_T
   &=
   -\vdual{v}{\Delta\deltaz}_T + \vdual{\Delta v}{\deltaz}_T
   =
   \dual{\traceD{T}(\deltaz)}{v}_{\partial T}
\end{align*}
for any $\deltaz\in\HD{T}$. This shows that $z^*$ solves \eqref{prob_dd_QQ}, that is,
$z=z^*=\Delta v$.
Due to this relation and $\Delta z = -v$, it follows by \eqref{prob_dd_z} that
\begin{align*}
   \dual{\traceD{T}(z)}{\deltav}_{\partial T}
   &=
   \vdual{z}{\Delta\deltav}_T - \vdual{\Delta z}{\deltav}_T
   \\
   &=
   \vdual{\Delta v}{\Delta\deltav}_T + \vdual{v}{\deltav}_T
   =
   \dual{\tz}{\deltav}_{\partial T}
   \quad\forall\deltav\in\HD{T}.
\end{align*}
In other words, $\traceD{T}(z)=\tz$.
This relation together with selecting $\deltav=v$ in \eqref{prob_dd_z} and $\deltaz=z$ in \eqref{prob_dd_QQ},
shows that
\begin{align*}
   \dual{\tz}{v}_{\partial T} = \|v\|_{\Delta,T}^2 = \|z\|_{\Delta,T}^2.
\end{align*}
Noting that
\(
   \|z\|_{\Delta,T} = \|\tz\|_\trD{\partial T}
\)
by \eqref{pde_dd_QQ}, this relation finishes the proof of the norm identity.
The space $\bHD{\partial T}$ is closed as the image of a bounded below operator.
\end{proof}

\begin{prop} \label{prop_D_jump}
(i) For $z\in\HD{\cT}$ it holds
\[
   z\in\HD{\Omega} \quad\Leftrightarrow\quad
   \dual{\traceD{}(v)}{z}_\cS = 0\quad\forall v\in\HDz{\Omega}
\]
and
\[
   z\in\HDz{\Omega} \quad\Leftrightarrow\quad
   \dual{\traceD{}(v)}{z}_\cS = 0\quad\forall v\in\HD{\Omega}.
\]
(ii) The identity
\begin{align*}
   \sum_{T\in\cT} \|\tz_T\|_\trD{\partial T}^2 = \|\tz\|_\trD{\cS}^2
   \quad\forall \tz=(\tz_T)_T \in \bHD{\cS}\cup\bHDzz{\cS}
\end{align*}
holds true.
\end{prop}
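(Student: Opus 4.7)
The main tool is the identity
\[
   \dual{\traceD{}(v)}{z}_\cS
   \;=\; \vdual{\pwDelta z}{v} - \vdual{z}{\Delta v}
   \qquad (v \in \HD{\Omega},\; z \in \HD{\cT}),
\]
which follows directly from \eqref{trDT} and \eqref{trD_duality}. If in addition $z \in \HD{\Omega}$, then $\pwDelta z = \Delta z$ and the right-hand side equals $-\dual{\traceD{\Omega}(z)}{v}_\Gamma$ by \eqref{trace_Omega}. Both forward implications then follow at once: if $v \in \HDz{\Omega}$, I invoke the global antisymmetry $\dual{\traceD{\Omega}(z)}{v}_\Gamma = -\dual{\traceD{\Omega}(v)}{z}_\Gamma$ together with $\traceD{\Omega}(v)=0$; if $z \in \HDz{\Omega}$, the right-hand side vanishes immediately. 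For the backward direction of the first biconditional, testing the identity against $v \in \cD(\Omega) \subset \HDz{\Omega}$ yields $\vdual{\pwDelta z}{v} = \vdual{z}{\Delta v}$, so the distributional Laplacian of $z$ coincides with $\pwDelta z \in L_2(\Omega)$, forcing $z \in \HD{\Omega}$. The second backward implication reduces to the first to secure $z \in \HD{\Omega}$; the identity then becomes $\dual{\traceD{}(v)}{z}_\cS = -\dual{\traceD{\Omega}(z)}{v}_\Gamma$ for every $v \in \HD{\Omega}$, whose vanishing forces $\traceD{\Omega}(z)=0$.

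\textbf{Part (ii), easy direction.}
Given any global lift $v \in \HD{\Omega}$ of $\tz$, the restriction $v|_T$ is a local lift of $\tz_T$, whence $\sum_T \|\tz_T\|_\trD{\partial T}^2 \le \sum_T \|v|_T\|_{\Delta,T}^2 = \|v\|_\Delta^2$; infimizing over admissible $v$ gives $\sum_T \|\tz_T\|_\trD{\partial T}^2 \le \|\tz\|_\trD{\cS}^2$.

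\textbf{Part (ii), reverse direction.}
The plan is to glue the \emph{optimal} local lifts from the proof of Lemma~\ref{la_tr_unity} into a single admissible global lift. For each $T \in \cT$, let $v_T \in \HD{T}$ be the unique solution of \eqref{prob_dd_z} with datum $\tz_T$, so that $\traceD{T}(v_T) = \tz_T$ and $\|v_T\|_{\Delta,T} = \|\tz_T\|_\trD{\partial T}$. Setting $v|_T := v_T$ produces an element of $\HD{\cT}$ whose squared norm equals $\sum_T \|\tz_T\|_\trD{\partial T}^2$. The crux—and the step I expect to be the main obstacle—is to upgrade this piecewise $v$ from $\HD{\cT}$ to $\HD{\Omega}$ (respectively $\HDz{\Omega}$ when $\tz \in \bHDzz{\cS}$). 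Here I would appeal to Part~(i): for every $w \in \HDz{\Omega}$, elementwise antisymmetry of $\traceD{T}$ combined with $\traceD{T}(v_T) = \tz_T$ gives
\[
   \dual{\traceD{}(w)}{v}_\cS
   \;=\; -\sum_{T \in \cT} \dual{\traceD{T}(v_T)}{w|_T}_{\partial T}
   \;=\; -\dual{\tz}{w}_\cS.
\]
Writing $\tz = \traceD{}(u)$ with $u \in \HD{\Omega}$ and applying the first-paragraph identity once more yields $\dual{\tz}{w}_\cS = -\dual{\traceD{\Omega}(w)}{u}_\Gamma = 0$, since $w \in \HDz{\Omega}$. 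Part~(i) then delivers $v \in \HD{\Omega}$; a short verification that $\traceD{}(v)=\tz$ finishes with $\|\tz\|_\trD{\cS} \le \|v\|_\Delta$, giving equality. The case $\tz \in \bHDzz{\cS}$ is treated analogously by choosing $u \in \HDz{\Omega}$ and testing against arbitrary $w \in \HD{\Omega}$ to conclude $v \in \HDz{\Omega}$.
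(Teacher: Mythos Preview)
Your proof is correct and follows essentially the same route as the paper's: Part~(i) via the integration-by-parts identity and distributional testing, and Part~(ii) by gluing optimal local lifts and invoking Part~(i) to certify that the glued function lies in $\HD{\Omega}$; the paper merely gestures at this last step (``we find with part~(i) that $\tilde z\in\HD{\Omega}$''), whereas you spell out the chain of dualities explicitly. Two minor remarks: the optimal local lift constructed in Lemma~\ref{la_tr_unity} is the function $z$ solving \eqref{prob_dd_QQ}, not the solution of \eqref{prob_dd_z}; and since $\|\cdot\|_\trD{\cS}$ is defined as an infimum over $\HD{\Omega}$ even for $\tz\in\bHDzz{\cS}$, your final sentence upgrading $v$ to $\HDz{\Omega}$ is unnecessary (though true).
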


\begin{proof}
The proof of (i) follows the standard procedure, cf.~\cite[Proof of Theorem 2.3]{CarstensenDG_16_BSF}
and \cite[Proof of Proposition 3.8(i)]{FuehrerHN_19_UFK}.
For $z\in\HD{\Omega}$ and $v\in\HDz{\Omega}$ we have that
\begin{align*}
   -\dual{\traceD{}(z)}{v}_\cS
   =
   \dual{\traceD{}(v)}{z}_\cS
   &\overset{\mathrm{def}}=
   \sum_{T\in\cT} \vdual{\Delta z}{v}_T - \vdual{z}{\Delta v}_T
   =
   \vdual{\Delta z}{v} - \vdual{z}{\Delta v} = \dual{\traceD{\Omega}(v)}{z}_\Gamma = 0.
\end{align*}
The penultimate step is due to \eqref{trace_Omega}, and the last identity holds since
$\traceD{\Omega}(v)=0$ by definition of $\HDz{\Omega}$.
This is the direction ``$\Rightarrow$'' in both statements of part (i).

Now, for given $z\in\HD{\cT}$ with $\dual{\traceD{}(v)}{z}_\cS = 0$ for any $v\in\HDz{\Omega}$
we have in the distributional sense
\[
   \Delta z(v)=\vdual{z}{\Delta v}=\vdual{\Delta z}{v}_\cT - \dual{\traceD{}(v)}{z}_\cS
   = \vdual{\Delta z}{v}_\cT\quad\forall v\in\mathcal{D}(\Omega).
\]
Therefore, $\Delta z\in L_2(\Omega)$, that is, $z\in\HD{\Omega}$.

Analogously, if $\dual{\traceD{}(v)}{z}_\cS = 0$ for any $v\in\HD{\Omega}$,
we conclude as before that $z\in\HD{\Omega}$. Then,
\[
   0 = \dual{\traceD{}(v)}{z}_\cS = \vdual{v}{\Delta z} - \vdual{\Delta v}{z}
     = -\dual{\traceD{\Omega}(z)}{v}_\Gamma
   \quad\forall v\in\HD{\Omega}
\]
implies that $\traceD{\Omega}(z)=0$, cf.~\eqref{trace_Omega}. That is, $z\in\HDz{\Omega}$.

It remains to prove (ii). Here we follow \cite[Proof of Proposition 3.8(ii)]{FuehrerHN_19_UFK}.
By definition of the norms it holds $\sum_{T\in\cT} \|\tz_T\|_\trD{\partial T}^2 \le \|\tz\|_\trD{\cS}^2$
for any $\tz=(\tz_T)_T\in\bHD{\cS}\cup\bHDzz{\cS}$.
To show the other bound let
$\tz=(\tz_T)_T\in \bHD{\cS}\cup\bHDzz{\cS}$ be given with $z\in\HD{\Omega}$ such that $\traceD{}(z)=\tz$.
Furthermore, for any $T\in\cT$, there exists $\tilde z_T\in\HD{T}$ such that $\traceD{T}(\tilde z_T)=\tz_T$
and $\|\tilde z_T\|_{\Delta,T}=\|\tz_T\|_\trD{\partial T}$.
Defining $\tilde z\in\HD{\cT}$ by $\tilde z|_T:=\tilde z_T$ ($T\in\cT$) we find with part (i) that
$\tilde z\in\HD{\Omega}$ with $\traceD{}(\tilde z)=\tz$.
Therefore,
\begin{align*}
   \sum_{T\in\cT} \|\tz_T\|_\trD{\partial T}^2
   &=
   \sum_{T\in\cT} \|\tilde z_T\|_{\Delta,T}^2 = \|\tilde z\|_\Delta^2
   \ge
   \|\tz\|_\trD{\cS}^2,
\end{align*}
which was left to prove.
\end{proof}

\begin{prop} \label{prop_D_trace}
It holds the identity
\[
   \|\tz\|_{\Delta',\cS} = \|\tz\|_\trD{\cS}
   \quad\forall\tz\in\bHD{\cS}.
\]
In particular,
\[
   \traceD{}:\; \HD{\Omega}\to \bHD{\cS},\qquad
   \traceD{}:\; \HDz{\Omega}\to \bHDzz{\cS}
\]
have unit norm and $\bHD{\cS}$, $\bHDzz{\cS}$ are closed.
\end{prop}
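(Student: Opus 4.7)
The plan is to mirror the strategy of Lemma~\ref{la_tr_unity}, but now globally, relying on that lemma and on Proposition~\ref{prop_D_jump}(ii) to assemble a global element of $\HD{\cT}$ (or $\HD{\Omega}$) from local minimizers.

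First I would prove the direction $\|\tz\|_{\Delta',\cS}\le\|\tz\|_\trD{\cS}$. For $\tz\in\bHD{\cS}$ and any lifting $v\in\HD{\Omega}$ with $\traceD{}(v)=\tz$, the defining duality \eqref{trD_duality} combined with the local bound $|\dual{\traceD{T}(v)}{z}_{\partial T}|\le \|v\|_{\Delta,T}\|z\|_{\Delta,T}$ summed over $T$ and Cauchy--Schwarz in $\ell^2$ give $|\dual{\tz}{z}_\cS|\le\|v\|_\Delta\|z\|_{\Delta,\cT}$ for all $z\in\HD{\cT}$. Taking the infimum over liftings $v$ yields the inequality, and simultaneously the estimate $\|\traceD{}(v)\|_\trD{\cS}\le\|v\|_\Delta$, i.e.\ $\|\traceD{}\|\le 1$ in both settings.

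For the reverse inequality I would carry out the standard patching argument. Given $\tz=(\tz_T)_T\in\bHD{\cS}$, Lemma~\ref{la_tr_unity} (via the construction in its proof, which produces an actual minimizer) furnishes, for each $T\in\cT$, an element $z_T\in\HD{T}$ with $\traceD{T}(z_T)=\tz_T$, $\|z_T\|_{\Delta,T}=\|\tz_T\|_\trD{\partial T}$, and $\dual{\tz_T}{z_T}_{\partial T}=\|\tz_T\|_\trD{\partial T}^2$. Gluing into $z:=(z_T)_T\in\HD{\cT}$ gives
\[
 \|z\|_{\Delta,\cT}^2=\sum_{T\in\cT}\|\tz_T\|_\trD{\partial T}^2=\|\tz\|_\trD{\cS}^2,
\]
where the second equality uses Proposition~\ref{prop_D_jump}(ii). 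Testing the dual norm with this $z$ yields
\[
 \|\tz\|_{\Delta',\cS}\ge\frac{\dual{\tz}{z}_\cS}{\|z\|_{\Delta,\cT}}
 =\frac{\sum_T\|\tz_T\|_\trD{\partial T}^2}{\|\tz\|_\trD{\cS}}=\|\tz\|_\trD{\cS}.
\]
The same construction applies when $\tz\in\bHDzz{\cS}$, since the identity in Proposition~\ref{prop_D_jump}(ii) is stated on both spaces.

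Once the norm identity is established, unit norm of $\traceD{}$ on both $\HD{\Omega}$ and $\HDz{\Omega}$ follows from the two inequalities above together with the fact that the infimum defining $\|\tz\|_\trD{\cS}$ is attained (by the same glued lifting, which lies in $\HD{\Omega}$, resp.\ $\HDz{\Omega}$, thanks to Proposition~\ref{prop_D_jump}(i) applied via $\traceD{\Omega}$). Closedness of $\bHD{\cS}$ and $\bHDzz{\cS}$ in $(\HD{\cT}',\|\cdot\|_{\Delta',\cS})$ then reduces to the standard fact that the range of a bounded operator is closed whenever it is isomorphic to the quotient by its kernel with equivalent norm; here the norm identity $\|\cdot\|_{\Delta',\cS}=\|\cdot\|_\trD{\cS}$ gives exactly the quotient norm, so the trace spaces are Banach (in fact Hilbert) spaces and hence closed subspaces of $\HD{\cT}'$.

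The only delicate point I anticipate is verifying that the local minimizers $z_T$ from Lemma~\ref{la_tr_unity} patch into an element of $\HD{\Omega}$ (or $\HDz{\Omega}$) when $\tz\in\bHD{\cS}$ (resp.\ $\bHDzz{\cS}$); this is precisely where Proposition~\ref{prop_D_jump}(i) is needed, since by construction the glued $\tilde z$ satisfies $\traceD{}(\tilde z)=\tz=\traceD{}(v)$ for some $v\in\HD{\Omega}$, so $\traceD{}(\tilde z-v)=0$ and part (i) forces $\tilde z\in\HD{\Omega}$ (and $\tilde z\in\HDz{\Omega}$ in the homogeneous case by a second application of part (i)).
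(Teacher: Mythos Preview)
Your overall strategy---combining Lemma~\ref{la_tr_unity} with Proposition~\ref{prop_D_jump}(ii)---matches the paper's exactly, but there is a concrete error in your reverse-inequality step. You claim that the local minimizer $z_T$ with $\traceD{T}(z_T)=\tz_T$ satisfies $\dual{\tz_T}{z_T}_{\partial T}=\|\tz_T\|_\trD{\partial T}^2$. This is false: by the antisymmetry noted just after \eqref{trDT_classical} one has
\[
   \dual{\traceD{T}(z)}{z}_{\partial T}=\vdual{\Delta z}{z}_T-\vdual{z}{\Delta z}_T=0
   \quad\text{for every } z\in\HD{T},
\]
so testing with your glued $z$ gives $\dual{\tz}{z}_\cS=0$, not $\|\tz\|_\trD{\cS}^2$.

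The fix is to separate the two roles you are conflating. In the proof of Lemma~\ref{la_tr_unity} there are \emph{two} functions: the lifting $z$ (with $\traceD{T}(z)=\tz$, realizing the trace norm) and the test function $v$ (satisfying $\dual{\tz}{v}_{\partial T}=\|v\|_{\Delta,T}^2=\|z\|_{\Delta,T}^2$, realizing the dual norm). For the reverse inequality you should glue the $v_T$'s into an element of $\HD{\cT}$ and test with that; this glued function need not lie in $\HD{\Omega}$, only in the product space. Equivalently---and this is how the paper proceeds---use directly that $\HD{\cT}$ is a product space, so the supremum defining $\|\tz\|_{\Delta',\cS}^2$ decouples as $\sum_{T\in\cT}\|\tz_T\|_{\Delta',\partial T}^2$, and then apply Lemma~\ref{la_tr_unity} and Proposition~\ref{prop_D_jump}(ii). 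Your final paragraph about patching liftings via Proposition~\ref{prop_D_jump}(i) is correct but is already the content of the proof of Proposition~\ref{prop_D_jump}(ii), so it is not needed separately here.
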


\begin{proof}
Having the tools at hand, the proof is standard (cf., e.g., \cite[Theorem~2.3]{CarstensenDG_16_BSF}
and \cite[Proposition 3.5]{FuehrerHN_19_UFK}).
By definition of the involved norms, a duality argument in product spaces, Lemma~\ref{la_tr_unity}
and Proposition~\eqref{prop_D_jump}(ii) one finds that
\begin{align*}
   \|\tz\|_{\Delta',\cS}^2
   &=
   \Bigl(\sup_{0\not=v\in\HD{\cT}} \frac{\sum_{T\in\cT}\dual{\tz_T}{v}_{\partial T}}{\|v\|_{\Delta,\cT}}\Bigr)^2
   =
   \sum_{T\in\cT}
   \sup_{0\not=v\in\HD{T}} \frac{\dual{\tz_T}{v}_{\partial T}^2}{\|v\|_{\Delta,T}^2}
   \\
   &=
   \sum_{T\in\cT} \|\tz_T\|_{\Delta',\partial T}^2
   =
   \sum_{T\in\cT} \|\tz_T\|_\trD{\partial T}^2
   =
   \|\tz\|_\trD{\cS}^2\qquad\forall\tz\in\bHD{\cS}.
\end{align*}
The spaces $\bHD{\cS}$ and $\bHDzz{\cS}$ are closed as the images of bounded below operators.
\end{proof}

\subsection{Traces and jumps, part two} \label{sec_trace2}

As it is not straightforward to discretize the range of $\traceD{T}$
(where the trace components are coupled), we proceed to introduce different trace operators and spaces.
According to the regularity of $u$ (the solution of \eqref{prob}) and $\Delta u$ (which will be
represented by an independent variable) we consider two different cases.

\subsubsection{Trace of $u$.}

Let us start by defining a trace operator that takes $H^2(T)$ instead of $\HD{T}$
as domain ($T\in\cT$). It is the restriction of $\traceD{T}$, cf.~\eqref{trDT},
\[
   \traceDt{T}:\;
   \left\{\begin{array}{cll}
      H^2(T) & \to & \HD{T}',\\
      v & \mapsto & \traceDt{T}(v):=\traceD{T}(v)
   \end{array}\right..
\]
Similarly as before, we have the duality relation
\[
   \dual{\traceDt{T}(v)}{z}_{\partial T} = - \dual{\traceD{T}(z)}{v}_{\partial T}
   \quad\forall v\in H^2(T),\ z\in\HD{T}.
\]
The corresponding collective trace operator (including boundary conditions) is
\[
   \traceDt{}:\;
   \left\{\begin{array}{cll}
      H^2_0(\Omega) & \to & \HD{\cT}',\\
      v & \mapsto & \traceDt{}(v) := (\traceDt{T}(v))_T
   \end{array}\right.
\]
with duality
\begin{align}
    \label{trDt_duality}
    \dual{\traceDt{}(v)}{z}_\cS &:= \sum_{T\in\cT} \dual{\traceDt{T}(v)}{z}_{\partial T}
    \quad(v\in H^2_0(\Omega),\ z\in\HD{\cT}).
\end{align}
The ranges of these operators are denoted by
\[
   \bHDt{\partial T} := \traceDt{T}(H^2(T)) \quad (T\in\cT)
   \quad\text{and}\quad
   \bHDtzz{\cS}      := \traceDt{}(H^2_0(\Omega)).
\]
As before, the local and global trace spaces are equipped with canonical trace norms,
\begin{align*}
   \|\tv\|_\trt{\partial T} &:= \inf\{\|v\|_{2,T};\; v\in H^2(T),\ \traceD{T}(v)=\tv\}
   &&(\tv\in\bHDt{\partial T},\ T\in\cT),\\
   \|\tv\|_\trt{\cS}        &:= \inf\{\|v\|_2;\; v\in H^2_0(\Omega),\ \traceD{}(v)=\tv\}
   &&(\tv\in\bHDtzz{\cS}),
\end{align*}
and alternative norms are induced by the respective duality,
\begin{align*}
   \|\tv\|_{\Delta',\partial T} &:= 
   \sup_{0\not=z\in\HD{T}} \frac{\dual{\tv}{z}_{\partial T}}{\|z\|_{\Delta,T}}
   && (\tv\in \bHDt{\partial T},\ T\in\cT),\\
   \|\tv\|_{\Delta',\cS} &:= 
   \sup_{0\not=z\in\HD{\cT}} \frac{\dual{\tv}{z}_{\cS}}{\|z\|_{\Delta,\cT}}
   && (\tv\in \bHDtzz{\cS}).
\end{align*}
It goes without saying that the dualities on $\partial T$ and $\cS$ are defined by the corresponding trace
operations \eqref{trDT} (generically for any local space), and \eqref{trDt_duality} on $\cS$.
For instance, the duality $\dual{\tv}{z}_{\partial T}$ between
$\tv\in\bHDt{\partial T}$ and $z\in\HD{T}$ is
$\vdual{\Delta z}{v}_T-\vdual{z}{\Delta v}_T$ with arbitrary $v\in H^2(T)$ such that $\traceDt{T}(v)=\tv$.

It is immediate that all the trace operators are bounded both with respect to the respective canonical
trace norm and the respective duality norm.

\begin{remark} \label{rem_tr}
The trace operator $\traceDt{T}$ gives rise to two components,
$\traceDt{T}(v)=(v|_{\partial T},\partial_\nn v|_{\partial T})$ for $v\in H^2(T)$.
On a non-smooth boundary $\partial T$, they are generally not independent.
That is, this trace operator does not map surjectively onto the product space
of separate traces, $v|_{\partial T}$ and $\partial_\nn v|_{\partial T}$,
cf.~Grisvard~\cite{Grisvard_85_EPN}. In \cite{CostabelD_96_IBS}, Costabel and Dauge discuss
this subject including dual spaces.
\end{remark}

\begin{prop} \label{prop_Dt_trace}
It holds the identity
\[
   \|\tv\|_{\Delta',\cS} = \|\tv\|_\trt{\cS}
   \quad\forall\tv\in\bHDtzz{\cS}.
\]
In particular,
\[
   \traceDt{}:\; H^2_0(\Omega)\to \bHDtzz{\cS}
\]
has unit norm and $\bHDtzz{\cS}$ is closed.
\end{prop}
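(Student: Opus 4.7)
The plan is to prove the norm identity $\|\tv\|_{\Delta',\cS}=\|\tv\|_\trt{\cS}$ and then read off the remaining assertions. The unit-norm claim is immediate from the definition of the canonical trace norm as an infimum; closedness of $\bHDtzz{\cS}$ in $\HD{\cT}'$ follows because, by construction, $\traceDt{}$ identifies $H_0^2(\Omega)/\ker(\traceDt{})$ isometrically with $(\bHDtzz{\cS},\|\cdot\|_\trt{\cS})$, so the identity transports completeness (hence closedness) to the subspace norm $\|\cdot\|_{\Delta',\cS}$.

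For the easy bound $\|\tv\|_{\Delta',\cS}\le\|\tv\|_\trt{\cS}$, I would fix any $v\in H_0^2(\Omega)$ with $\traceDt{}(v)=\tv$. Writing $\dual{\tv}{z}_\cS=\vdual{\pwDelta z}{v}-\vdual{z}{\Delta v}$ for $z\in\HD{\cT}$ and using Cauchy--Schwarz in the product space gives a bound by $\|z\|_{\Delta,\cT}\,\bigl(\|v\|^2+\|\Delta v\|^2\bigr)^{1/2}$. The one nonroutine step is the classical identity $\|\Delta v\|=\|\Grad\grad v\|$ valid for $v\in H_0^2(\Omega)$, proved by double integration by parts on $\mathcal{D}(\Omega)$ plus density. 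That turns the bound into $\|z\|_{\Delta,\cT}\|v\|_2$, and taking supremum over $z$ and infimum over $v$ gives the estimate.

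The reverse inequality $\|\tv\|_\trt{\cS}\le\|\tv\|_{\Delta',\cS}$ is the main obstacle: the optimal representative in $\HDz{\Omega}$ supplied by Proposition~\ref{prop_D_trace} need not lie in $H_0^2(\Omega)$, so it cannot be reused directly. Instead, I would produce the Hilbert-space minimizer $v^*\in H_0^2(\Omega)$ of $\|\cdot\|_2$ subject to $\traceDt{}(v^*)=\tv$, which realizes $\|v^*\|_2=\|\tv\|_\trt{\cS}$. The key regularity input is the Euler--Lagrange equation, obtained by testing the orthogonality condition against functions $w\in\mathcal{D}(T)$ extended by zero. Such $w$ belong to $\ker(\traceDt{})$ because the distributional identity $\vdual{\Delta z}{w}_T=\vdual{z}{\Delta w}_T$ makes the pairing $\dual{\traceDt{}(w)}{z}_\cS$ vanish for every $z\in\HD{\cT}$. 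Orthogonality then forces $\Delta^2 v^*+v^*=0$ in $L_2(T)$ for every $T\in\cT$. This is precisely the regularity needed for $z:=-\pwDelta v^*$ to be a legitimate element of $\HD{\cT}$. A short elementwise calculation, replacing $\Delta^2 v^*$ by $-v^*$ via the PDE, yields simultaneously $\dual{\tv}{z}_\cS=\|v^*\|^2+\|\Delta v^*\|^2=\|v^*\|_2^2$ and $\|z\|_{\Delta,\cT}=\|v^*\|_2$, from which $\|v^*\|_2\le\|\tv\|_{\Delta',\cS}$ follows at once.
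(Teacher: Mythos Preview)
Your proof is correct and follows essentially the same route as the paper: both construct the $\|\cdot\|_2$-minimizer $v$ in $H^2_0(\Omega)$ subject to $\traceDt{}(v)=\tv$, use its Euler--Lagrange equation $\Delta^2 v+v=0$ on each element, and then test against $z=\pm\pwDelta v$ together with the identity $\|\Delta v\|=\|\Grad\grad v\|$ to saturate the dual norm. The only organizational difference is that the paper builds $v$ elementwise as the local biharmonic solution with prescribed trace and then asserts $v\in H^2_0(\Omega)$, whereas you start from the global minimizer and derive the local PDE---your ordering sidesteps the need to justify that the local pieces patch to a global $H^2_0$-function.
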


\begin{proof}
Let $\tv=(\tv_T)_T\in\bHDtzz{\cS}$ be given. By definition of the norms, one sees that
$\|\tv\|_{\Delta',\cS}\le \|v\|_\Delta$ for any $v\in H^2_0(\Omega)$ with $\traceDt{}(v)=\tv$. Since
\begin{equation} \label{eq_Dt}
   \|\Delta v\|=\|\Grad\grad v\| \quad\forall v\in H^2_0(\Omega)
\end{equation}
(cf.~\cite[(1.2.8)]{Ciarlet}) we conclude that
$\|\tv\|_{\Delta',\cS} \le \|\tv\|_\trt{\cS}$.

To show the other inequality, we define $v_T\in H^2(T)$ ($T\in\cT$) as the solution to
\[
   \bigl(\div\Div\Grad\grad v_T + v_T =\bigr)\ \Delta^2 v_T+v_T = 0\quad\text{in}\quad T,\qquad
   \traceDt{T}(v_T)=\tv_T,
\]
and introduce functions $v$, $z$ with $v|_T=v_T$ and $z|_T=\Delta v_T$ ($T\in\cT$).
We conclude that $v\in H^2_0(\Omega)$ and $\|v\|_2=\|\tv\|_\trt{\cS}$. Furthermore,
since $\Delta z_T=-v_T$, $z\in\HD{\cT}$, and also using relation \eqref{eq_Dt} we find that
\[
   \|z\|_{\Delta,\cT}^2 = \sum_{T\in\cT} \|\Delta v_T\|_T^2 + \|v_T\|_T^2 = \|v\|_\Delta^2=\|v\|_2^2.
\]
Finally, we observe that
\begin{align*}
   \|v\|_2^2 &= \|v\|_\Delta^2 = \sum_{T\in\cT} \vdual{\Delta v_T}{\Delta v_T}_T + \vdual{v_T}{v_T}_T
             = \sum_{T\in\cT} -\dual{\traceDt{T}(v_T)}{\Delta v_T}_{\partial T}
             = -\dual{\tv}{z}_\cS.
\end{align*}
Here, we made use of the relation $\Delta^2 v_T+v_T = 0$.
Collecting the findings we conclude that
\[
   \|\tv\|_\trt{\cS}^2 = \|v\|_2^2 = \|z\|_{\Delta,\cT}^2 = -\dual{\tv}{z}_\cS.
\]
This yields
\[
   \|\tv\|_\trt{\cS} \le \|\tv\|_{\Delta',\cS}
\]
and finishes the proof.
\end{proof}

\begin{remark} \label{rem_tr_local}
Comparing the results for our trace operators $\traceD{}$ (Proposition~\ref{prop_D_trace})
and $\traceDt{}$ (Proposition~\ref{prop_Dt_trace}) one notices that there is no result
for the local operator $\traceDt{T}$ that corresponds to Lemma~\ref{la_tr_unity}.
The reason for the lack of such a local property is that relation~\eqref{eq_Dt} requires
homogeneous boundary conditions.
\end{remark}

\begin{prop} \label{prop_tDt_jump}
For $z\in\HD{\cT}$ it holds
\[
   z\in\HD{\Omega} \quad\Leftrightarrow\quad
   \dual{\traceDt{}(v)}{z}_\cS = 0\quad\forall v\in H^2_0(\Omega).
\]
\end{prop}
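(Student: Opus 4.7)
The plan is to reduce this proposition to Proposition~\ref{prop_D_jump}(i) by exploiting the fact that, by construction, $\traceDt{T}$ is just the restriction of $\traceD{T}$ to $H^2(T)\subset\HD{T}$. Consequently, for every $v\in H^2_0(\Omega)\subset H^2(\cT)$ and every $z\in\HD{\cT}$ one has $\dual{\traceDt{}(v)}{z}_\cS = \dual{\traceD{}(v)}{z}_\cS$, so the statement will follow once $v\in H^2_0(\Omega)$ can be used as an admissible test function in Proposition~\ref{prop_D_jump}(i).

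For the direction ``$\Rightarrow$'', I first check the inclusion $H^2_0(\Omega)\subset\HDz{\Omega}$. Since $\cD(\Omega)\subset\HDz{\Omega}$, since $\HDz{\Omega}=\ker(\traceD{\Omega})$ is closed in $\HD{\Omega}$ (as the kernel of a bounded operator), and since $\|\cdot\|_\Delta\le\|\cdot\|_2$ implies that the $\|\cdot\|_2$-closure of $\cD(\Omega)$ is contained in its $\|\cdot\|_\Delta$-closure, the inclusion follows. Given $z\in\HD{\Omega}$ and $v\in H^2_0(\Omega)\subset\HDz{\Omega}$, Proposition~\ref{prop_D_jump}(i) yields $\dual{\traceD{}(v)}{z}_\cS=0$, and therefore $\dual{\traceDt{}(v)}{z}_\cS=0$ as well.

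For the direction ``$\Leftarrow$'', I would specialize to $v\in\cD(\Omega)\subset H^2_0(\Omega)$. Unpacking the definition of $\traceDt{}$ together with the anti-symmetry $\dual{\traceDt{T}(v)}{z}_{\partial T}=-\dual{\traceD{T}(z)}{v}_{\partial T}$ and formula~\eqref{trDT}, the hypothesis transforms into
\[
   0 = \dual{\traceDt{}(v)}{z}_\cS = \vdual{v}{\pwDelta z} - \vdual{\Delta v}{z}
   \qquad\forall v\in\cD(\Omega).
\]
This identifies the distributional Laplacian of $z$ with the piecewise Laplacian $\pwDelta z\in L_2(\Omega)$, so $\Delta z\in L_2(\Omega)$ and $z\in\HD{\Omega}$.

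The only non-bookkeeping step is the density argument $H^2_0(\Omega)\subset\HDz{\Omega}$, but this is routine given the continuity of $\traceD{\Omega}$; the rest reduces to invoking Proposition~\ref{prop_D_jump}(i) or to a standard distributional computation.
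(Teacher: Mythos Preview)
Your proposal is correct and follows essentially the same approach as the paper: for ``$\Rightarrow$'' the paper invokes integration by parts and density (which amounts to your inclusion $H^2_0(\Omega)\subset\HDz{\Omega}$ plus the computation already carried out in Proposition~\ref{prop_D_jump}(i)), and for ``$\Leftarrow$'' the paper uses exactly the distributional identification of $\Delta z$ with $\pwDelta z$ via test functions in $\cD(\Omega)$. Your write-up simply makes explicit the bookkeeping that the paper compresses into the phrase ``analogous to that of Proposition~\ref{prop_D_jump}(i).''
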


\begin{proof}
The proof is analogous to that of Proposition~\ref{prop_D_jump}(i).
The direction ``$\Rightarrow$'' follows by integration by parts and
density arguments.
The other direction is proved by taking $z\in\HD{\cT}$
with $\dual{\traceDt{}(v)}{z}_\cS = 0$ for any $v\in H^2_0(\Omega)$,
and concluding that $\Delta z\in L_2(\Omega)$ so that $z\in\HD{\Omega}$.
\end{proof}

\subsubsection{Trace of $\Delta u$.}

Now let us turn to possible trace operations for $\Delta u$ ($u$ representing a function
with a regularity according to the solution of \eqref{prob}). Obviously,
since $f\in L_2(\Omega)$ by assumption, $\Delta u\in\HD{\Omega}$ by \eqref{p1}.
That is why we have considered the trace operator $\traceD{}$ in \S\ref{sec_trace1}.
Since we have restricted the domain for the definition of $\traceDt{}$, duality considerations
reveal that we now have to consider extended traces by testing with $H^2$-functions.
This seems to force to define an operator
\begin{equation} \label{trtDT}
   \tracetD{T}:\;
   \left\{\begin{array}{cll}
      \HD{T} & \to & H^2(T)',\\
      z & \mapsto & \tracetD{T}(z):=\traceD{T}(z)
   \end{array}\right.\quad (T\in\cT)
\end{equation}
with corresponding collective trace operator $\tracetD{}$, and trace norms and norms defined by duality
with $H^2$. Again, this operator gives rise to two components,
\[
   (\cdot)|_{\partial T}:\;\left\{\begin{array}{clc}
      \HD{T} & \to & \{z\in H^2(T);\; \partial_\nn z|_{\partial T}=0\}'\\
      v      & \mapsto & z\mapsto \dual{\tracetD{T}(v)}{z}_{\partial T}
   \end{array}\right.\qquad (T\in\cT)
\]
and
\[
   (\partial_\nn\,\cdot)|_{\partial T}:\;\left\{\begin{array}{clc}
      \HD{T} & \to & \{z\in H^2(T);\; z|_{\partial T}=0\}'\\
      v      & \mapsto & z\mapsto -\dual{\tracetD{T}(v)}{z}_{\partial T}
   \end{array}\right.\qquad (T\in\cT),
\]
cf.~\eqref{trDT_classical}.
For a smooth boundary $\partial T$, the two components are independent as in that case
the operator $\tracetD{T}$ maps $\HD{T}$ onto
$H^{-3/2}(\partial T)\times H^{-1/2}(\partial T):=H^{3/2}(\partial T)'\times H^{1/2}(\partial T)'$.
Here, $H^{3/2}(\partial T)$ denotes the space of traces onto $\partial T$ of $H^2(T)$-functions,
and $H^{1/2}(\partial T)$ is that of the normal derivatives.
Glowinski and Pironneau give details in \cite[Props 2.3, 2.4]{GlowinskiP_79_NMF} and refer
to Lions and Magenes for a proof, see~\cite[Chapter 2: Theorem 6.5, Section 9.8 (p.~213)]{LionsMagenes}.
However, on a polygonal element $T$, the trace operator is not surjective onto
$H^{-3/2}(\partial T)\times H^{-1/2}(\partial T)$. This has been indicated by Costabel and
Dauge in \cite{CostabelD_96_IBS}.
Furthermore, it turns out that in general the operator $\tracetD{T}$ is not bounded below.
We give a counterexample in the appendix.

For these reasons we avoid to employ the seemingly obvious choice \eqref{trtDT}.
Instead, we take a trace operator defined in \cite{FuehrerHN_19_UFK}. It can be interpreted
as an extension of $\tracetD{T}$ to a larger domain, see Lemma~\ref{la_trtD} below.
Let us repeat some definitions and needed properties from \cite{FuehrerHN_19_UFK}.

We introduce trace operators $\traceDD{T}:\;\HdDiv{T}\to H^2(T)'$
for $T\in\cT$ by
\begin{align} \label{trT_dd}
   \dual{\traceDD{T}(\TTheta)}{z}_{\partial T} := \vdual{\div\Div\TTheta}{z}_T - \vdual{\TTheta}{\Grad\grad z}_T,
\end{align}
with the collective variant defined as
\[
   \traceDD{}:\;
   \left\{\begin{array}{cll}
      \HdDiv{\Omega} & \to & H^2(\cT)',\\
      \TTheta & \mapsto & \traceDD{}(\TTheta) := (\traceDD{T}(\TTheta))_T
   \end{array}\right.
\]
with duality 
\begin{align} \label{tr_dd}
    \dual{\traceDD{}(\TTheta)}{z}_\cS := \sum_{T\in\cT} \dual{\traceDD{T}(\TTheta)}{z}_{\partial T}.
\end{align}
The range of $\traceDD{}$ is denoted by
\begin{align*}
   \bH^{-3/2,-1/2}(\cS) := \traceDD{}(\HdDiv{\Omega})
\end{align*}
and provided with the trace norm
\begin{align*}
   \|\tq\|_\trddiv{\cS} &:= \inf \Bigl\{\|\TTheta\|_{\div\Div};\; \TTheta\in \HdDiv{\Omega},\ \traceDD{}(\TTheta)=\tq\Bigr\}
\end{align*}
or the duality norm
\begin{align*}
   \|\tq\|_{-3/2,-1/2,\cS} &:=  \sup_{0\not=z\in H^2(\cT)} \frac{\dual{\tq}{z}_\cS}{\|z\|_{2,\cT}},
   \quad \tq\in \bH^{-3/2,-1/2}(\cS).
\end{align*}
Here, the duality is defined as
\begin{align} \label{tr_dd_dual}
   \dual{\tq}{z}_\cS := \sum_{T\in\cT} \dual{\tq_T}{z}_{\partial T}
\end{align}
with
\[
   \dual{\tq}{z}_{\partial T} := \dual{\traceDD{T}(\TTheta)}{z}_{\partial T}
   \quad\text{for } \TTheta\in \HdDiv{T}\text{ with } \traceDD{T}(\TTheta) = \tq=(\tq_T)_T,
\]
as in \eqref{trT_dd} and \eqref{tr_dd}.

\begin{prop}[{\cite[Proposition 5]{FuehrerHN_19_UFK}}] \label{prop_dd_trace}
It holds the identity
\[
   \|\tq\|_{-3/2,-1/2,\cS} = \|\tq\|_\trddiv{\cS}
   \quad\forall\tq\in\bH^{-3/2,-1/2}(\cS).
\]
In particular,
\[
   \traceDD{}:\; \HdDiv{\Omega}\to \bH^{-3/2,-1/2}(\cS)
\]
has unit norm and $\bH^{-3/2,-1/2}(\cS)$ is closed.
\end{prop}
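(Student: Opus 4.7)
The proof plan mirrors that of Proposition~\ref{prop_D_trace}, adapting the scalar pairing $(\Delta,\Delta)$ underlying Lemma~\ref{la_tr_unity} to the tensor pairing $(\div\Div,\Grad\grad)$ appropriate for $\HdDiv{}$. Two ingredients are required: a local norm identity on each element, and a decomposition of the global trace norm into a sum over $T\in\cT$.

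For the local identity on a fixed $T\in\cT$, given $\tq$ in the local image $\traceDD{T}(\HdDiv{T})$, I would define $z\in H^2(T)$ as the unique solution to
\begin{equation*}
   \vdual{\Grad\grad z}{\Grad\grad \delta z}_T + \vdual{z}{\delta z}_T
   = \dual{\tq}{\delta z}_{\partial T}
   \qquad\forall \delta z\in H^2(T),
\end{equation*}
which formally encodes $\Delta^2 z + z = 0$ in $L_2(T)$ together with natural boundary conditions carrying the data $\tq$. Setting $\TTheta:=-\Grad\grad z$, one checks via \eqref{trT_dd} that $\div\Div\TTheta = z\in L_2(T)$, so $\TTheta\in\HdDiv{T}$, and that $\traceDD{T}(\TTheta)=\tq$. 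Testing the defining equation with $\delta z=z$ yields
\begin{equation*}
   \|\TTheta\|_\trddiv{T}^2 = \|z\|_{2,T}^2 = \dual{\tq}{z}_{\partial T}
   \le \|\tq\|_{-3/2,-1/2,\partial T}\,\|z\|_{2,T},
\end{equation*}
so that $\|\tq\|_\trddiv{\partial T}\le\|\TTheta\|_\trddiv{T}\le\|\tq\|_{-3/2,-1/2,\partial T}$. The reverse inequality follows from a Cauchy--Schwarz estimate on \eqref{trT_dd} and infimization over admissible $\TTheta$.

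For the global decomposition I would prove, in parallel to Proposition~\ref{prop_D_jump}, a jump characterization: a piecewise smooth tensor lies in $\HdDiv{\Omega}$ exactly when the corresponding skeleton duality $\dual{\traceDD{}(\cdot)}{z}_\cS$ vanishes for every $z\in H^2_0(\Omega)$. Armed with this, the local minimum-norm extensions $\TTheta_T$ from the previous step can be glued into a piecewise tensor that actually belongs to $\HdDiv{\Omega}$, producing $\sum_T\|\tq_T\|_\trddiv{\partial T}^2\ge\|\tq\|_\trddiv{\cS}^2$; the reverse bound is immediate from the definition of the infimum norm.

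Finally, the duality norm splits over the product structure of $H^2(\cT)$ as $\|\tq\|_{-3/2,-1/2,\cS}^2=\sum_T\|\tq_T\|_{-3/2,-1/2,\partial T}^2$; combining this with the local identity and the decomposition yields $\|\tq\|_{-3/2,-1/2,\cS}=\|\tq\|_\trddiv{\cS}$, and closedness of $\bH^{-3/2,-1/2}(\cS)$ follows because $\traceDD{}$ is then bounded below. The main obstacle is the local construction: verifying $\TTheta=-\Grad\grad z\in\HdDiv{T}$ with $\traceDD{T}(\TTheta)=\tq$ requires matching the natural boundary conditions hidden in the variational formulation for $z$ to those encoded by $\tq$ through \eqref{trT_dd}, including a sign/orientation check that is the tensor analogue of the antisymmetry $\dual{\traceD{T}(z)}{v}_{\partial T}=-\dual{\traceD{T}(v)}{z}_{\partial T}$ used in the scalar case.
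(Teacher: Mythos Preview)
The paper does not prove this proposition; it is quoted verbatim from \cite[Proposition~5]{FuehrerHN_19_UFK}. Your proposal is correct and is precisely the tensor analogue of the arguments the paper gives for Lemma~\ref{la_tr_unity} and Propositions~\ref{prop_D_jump}, \ref{prop_D_trace} in the scalar $(\Delta,\Delta)$ setting---which the paper in turn says are essentially taken from \cite{FuehrerHN_19_UFK}. So your approach almost certainly coincides with the original.

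One remark on your local step: it is in fact \emph{simpler} than the scalar case in Lemma~\ref{la_tr_unity}. There the trace operator maps $\HD{T}$ into its own dual, and the antisymmetry $\dual{\traceD{T}(z)}{v}_{\partial T}=-\dual{\traceD{T}(v)}{z}_{\partial T}$ forces the two-step construction (first $v$, then $z$, then identifying $z=\Delta v$). In the present setting $\traceDD{T}$ pairs two different spaces, and the map $z\mapsto-\Grad\grad z$ lands directly in $\HdDiv{T}$ with $\div\Div(-\Grad\grad z)=z$ thanks to $\Delta^2 z+z=0$; the trace identity $\traceDD{T}(-\Grad\grad z)=\tq$ then drops out of the variational equation in one line, exactly as you wrote. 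The ``obstacle'' you flag is therefore not a genuine difficulty here. The jump characterization you need (piecewise $\TTheta\in\HdDiv{\cT}$ lies in $\HdDiv{\Omega}$ iff its skeleton pairing with all $z\in H^2_0(\Omega)$ vanishes) is the companion to Proposition~\ref{prop_gg_jump} and is proved by the same distributional argument as Proposition~\ref{prop_D_jump}(i).
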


\begin{prop}[{\cite[Proposition 8]{FuehrerHN_19_UFK}}] \label{prop_gg_jump}
For $z\in H^2(\cT)$ the following equivalence holds,
\[
   z\in H^2_0(\Omega)\quad\Leftrightarrow\quad
   \dual{\tq}{z}_\cS=0 \quad\forall\tq\in \bH^{-3/2,-1/2}(\cS).
\]
\end{prop}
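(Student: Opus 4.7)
The forward direction ($\Rightarrow$) is a density argument. For $\varphi\in\cD(\Omega)$ and any $\TTheta\in\HdDiv{\Omega}$, integration by parts on $\Omega$ (no boundary contribution since $\varphi$ has compact support) shows that
\[
   \dual{\traceDD{}(\TTheta)}{\varphi}_\cS
   = \vdual{\div\Div\TTheta}{\varphi} - \vdual{\TTheta}{\Grad\grad\varphi} = 0.
\]
Since the skeleton pairing is continuous in $z$ with respect to $\|z\|_{2,\cT}$ by Proposition~\ref{prop_dd_trace} (combined with \eqref{tr_dd_dual}), this identity extends by density of $\cD(\Omega)$ in $H^2_0(\Omega)$ to all $z\in H^2_0(\Omega)$ and all $\tq\in \bH^{-3/2,-1/2}(\cS)$.

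For the reverse direction ($\Leftarrow$) I would split the argument in two steps. \emph{Step 1 (global $H^2$-regularity).} Testing the hypothesis against $\TTheta\in\cD(\Omega)_{\mathrm{sym}}^{d\times d}\subset\HdDiv{\Omega}$, the defining formula \eqref{trT_dd} yields
\[
   \vdual{\div\Div\TTheta}{z} = \vdual{\TTheta}{\pwGrad\pwgrad z}\qquad
   \forall\TTheta\in\cD(\Omega)_{\mathrm{sym}}^{d\times d}.
\]
The left-hand side is the action of the distributional Hessian of $z$ on $\TTheta$, and the right-hand side is the $L_2$-pairing with the piecewise Hessian $\pwGrad\pwgrad z\in\LL_2^s(\Omega)$. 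Hence the distributional Hessian of $z$ coincides with a square-integrable symmetric tensor, which implies $z\in H^2(\Omega)$.

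\emph{Step 2 (vanishing boundary traces).} Knowing $z\in H^2(\Omega)$, for any smooth symmetric $\TTheta\in C^\infty(\overline\Omega)_{\mathrm{sym}}^{d\times d}$ classical integration by parts produces only boundary contributions on $\Gamma$, so
\[
   0 = \dual{\traceDD{}(\TTheta)}{z}_\cS
     = \int_\Gamma z\,(\Div\TTheta\cdot\nn) - \int_\Gamma (\TTheta\nn)\cdot\grad z.
\]
Choosing $\TTheta$ with $\TTheta\nn|_\Gamma = 0$ but $(\Div\TTheta\cdot\nn)|_\Gamma$ prescribed freely forces $z|_\Gamma = 0$. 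The surface gradient of $z$ along $\Gamma$ then vanishes as well, so the remaining boundary integral reduces to $\int_\Gamma (\nn^\transp\TTheta\nn)\,\partial_\nn z$; varying $(\nn^\transp\TTheta\nn)|_\Gamma$ freely forces $\partial_\nn z|_\Gamma = 0$. Consequently $z\in H^2_0(\Omega)$.

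\emph{Main obstacle.} The delicate part is Step~2: one must exhibit sufficiently many smooth tensor fields whose boundary data $(\TTheta\nn,\,\nn^\transp\TTheta\nn,\,\Div\TTheta\cdot\nn)|_\Gamma$ span a rich enough set to disentangle $z|_\Gamma$ from $\partial_\nn z|_\Gamma$ on the Lipschitz boundary $\Gamma$. The cleanest way to bypass the explicit construction is to invoke the surjectivity of the Kirchhoff--Love boundary traces for $\HdDiv{\Omega}$ established in \cite{FuehrerHN_19_UFK}; together with $z\in H^2(\Omega)$ from Step~1 this yields the vanishing of $(z|_\Gamma,\,\partial_\nn z|_\Gamma)$ purely by duality, avoiding any delicate approximation on $\Gamma$.
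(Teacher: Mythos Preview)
The present paper does not give its own proof of this proposition; it is quoted verbatim from \cite[Proposition~8]{FuehrerHN_19_UFK}. Your outline is correct and follows exactly the pattern of the analogous results that \emph{are} proved here, Propositions~\ref{prop_D_jump}(i) and~\ref{prop_tDt_jump}: the forward direction by integration by parts and density, the reverse direction by first recovering global regularity via compactly supported test tensors and then extracting the homogeneous boundary data.

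Your identification of the genuine difficulty is accurate. On a merely Lipschitz boundary one cannot simply ``prescribe $(\Div\TTheta)\cdot\nn$ freely while keeping $\TTheta\nn=0$'' by an elementary construction, and the clean way to close Step~2 is precisely the one you name at the end: invoke the duality between $\traceDD{}$ and the $\mathrm{Ggrad}$-trace from \cite{FuehrerHN_19_UFK}, which characterises $H^2_0(\Omega)$ as the kernel of the latter. That is in fact how the cited reference argues. One small point in Step~1: from ``distributional Hessian in $L_2$'' together with $z\in L_2$ you still need $\nabla z\in L_2$ to conclude $z\in H^2(\Omega)$; this follows by a Lions/Ne\v{c}as-type argument (each $\partial_i z$ lies in $H^{-1}$ with gradient in $L_2$, hence in $L_2$), so the step is valid but not entirely immediate.
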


Now, the connection between $\tracetD{}$ and $\traceDD{}$ is as follows.
For given $\sigma\in\HD{\Omega}$, it holds
$\diagtensor(\sigma):=\begin{pmatrix}\sigma & 0\\ 0 & \sigma\end{pmatrix}\in\HdDiv{\Omega}$
since $\div\Div\diagtensor(\sigma)=\Delta\sigma$, and one concludes that
\begin{align*}
   \dual{\tracetD{}(\sigma)}{v}_\cS
   &=
   \vdual{\sigma}{\Delta v}_\cT - \vdual{\Delta\sigma}{v}
   \\
   &=
   \vdual{\diagtensor(\sigma)}{\Grad\grad v}_\cT - \vdual{\div\Div\diagtensor(\sigma)}{v}
   =
   -\dual{\traceDD{}(\diagtensor(\sigma))}{v}_\cS
   \quad\forall v\in H^2(\cT).
\end{align*}
It is clear that $\diagtensor:\;\HD{\Omega}\to\HdDiv{\Omega}$ is not surjective.
Furthermore, since traces of images of $\diagtensor$ do not have jump terms at vertices of $\cT$
which are present in the case of traces of $\HdDiv{\Omega}$, see~\cite{FuehrerHN_19_UFK},
it is clear that $\tracetD{}$ does not map surjectively onto $\bH^{-3/2,-1/2}(\cS)$.

Let us note this result.

\begin{lemma} \label{la_trtD}
\[
   \tracetD{} = -\traceDD{}\circ\diagtensor:\; \HD{\Omega} \to \bH^{-3/2,-1/2}(\cS)
\]
is bounded but not surjective. In particular,
Dirac distributions at boundary points, $\delta_e:\;z\mapsto z|_T(e)$
($e\in\Gamma\cap\overline{T}$, $T\in\cT$, $z\in H^2(\cT)$ with $\mathrm{supp}(z)=\overline{T}$)
are elements of $\bH^{-3/2,-1/2}(\cS)$ but not of $\tracetD{}(\HD{\Omega})$.
\end{lemma}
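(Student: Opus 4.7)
My plan is to verify the three assertions of the lemma --- the representation formula, boundedness, and failure of surjectivity --- in turn. The representation $\tracetD{} = -\traceDD{}\circ\diagtensor$ as maps $\HD{\Omega}\to H^2(\cT)'$ has essentially already been derived in the displayed computation immediately preceding the statement, using $\div\Div\diagtensor(\sigma) = \Delta\sigma$ and the pointwise identity $\diagtensor(\sigma){:}\Grad\grad v = \sigma\,\Delta v$. That same computation also shows the image of $\tracetD{}$ sits inside $\bH^{-3/2,-1/2}(\cS)$, so the operator is well-defined as claimed.

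Boundedness then follows by combining two simple facts: $\diagtensor\colon\HD{\Omega}\to\HdDiv{\Omega}$ is bounded since $\|\diagtensor(\sigma)\|_\trddiv{\Omega}^2 = d\,\|\sigma\|^2 + \|\Delta\sigma\|^2 \le d\,\|\sigma\|_\Delta^2$, and $\traceDD{}$ has unit norm by Proposition~\ref{prop_dd_trace}.

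Non-surjectivity is the substantive part, and I would split it into two steps. First, that the boundary Dirac distributions $\delta_e$ really live in $\bH^{-3/2,-1/2}(\cS)$: this follows from the explicit Kirchhoff--Love-type trace characterization of $\HdDiv{}$-tensors in \cite{FuehrerHN_19_UFK}, where corner/vertex Dirac contributions appear naturally in the trace expansion, and I would construct an appropriate $\TTheta\in\HdDiv{\Omega}$ with a prescribed tangential-normal discontinuity supported near $e$ realizing $\traceDD{}(\TTheta)=\delta_e$. Second, that no $\sigma\in\HD{\Omega}$ produces $\tracetD{}(\sigma) = \delta_e$: the guiding observation is that every image $\diagtensor(\sigma)$ satisfies $\diagtensor(\sigma)\nn\cdot\bt \equiv 0$ along element boundaries, so the vertex jump that would carry the point-mass content of $\delta_e$ vanishes identically for every diagonal tensor. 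Turning this heuristic into a rigorous counterexample is the main obstacle; this is precisely what is announced just before the lemma and carried out in Appendix~\ref{sec_app1}, so my plan is to cite that appendix for the detailed construction rather than duplicate it here.
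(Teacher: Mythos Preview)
Your outline for the identity and for boundedness is correct and matches the paper: the displayed computation immediately preceding the lemma already gives $\tracetD{} = -\traceDD{}\circ\diagtensor$, and boundedness follows from $\|\diagtensor(\sigma)\|_{\mathrm{dDiv}}^2 = d\|\sigma\|^2 + \|\Delta\sigma\|^2 \le d\,\|\sigma\|_\Delta^2$ together with Proposition~\ref{prop_dd_trace}. The membership $\delta_e\in\bH^{-3/2,-1/2}(\cS)$ is indeed taken from the Kirchhoff--Love trace analysis of \cite{FuehrerHN_19_UFK}, as you propose.

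The gap is in the last step. You point to Appendix~\ref{sec_app1} for the proof that $\delta_e\notin\tracetD{}(\HD{\Omega})$, but that appendix establishes a statement of the \emph{opposite} flavor: it builds a sequence $(v_\varepsilon)\subset\HD{T}$ with $\tracetD{T}(v_\varepsilon)\to\delta$ in $H^2(T)'$, showing that $\delta$ lies in the \emph{closure} of the range (hence the range is not closed). The fact you need, $\delta_e\notin\tracetD{}(\HD{\Omega})$, is in Appendix~\ref{sec_app2}, and the argument there is not the $\diagtensor(\sigma)\nn\cdot\bt\equiv 0$ heuristic --- which, as you acknowledge, is only formal for non-smooth $\sigma$. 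Instead it uses that $v=\log|\cdot-e|$ is harmonic and hence lies in $\HD{\Omega}$ while being unbounded at $e$: if some $\sigma\in\HD{\Omega}$ satisfied $\tracetD{\Omega}(\sigma)=\delta_e$, then for $H^2$-regularizations $v_n\to v$ in $\HD{\Omega}$ one would have $v_n(e)=\dual{\traceD{\Omega}(\sigma)}{v_n}_\Gamma\to\dual{\traceD{\Omega}(\sigma)}{v}_\Gamma<\infty$, contradicting $v_n(e)\to\infty$. So your plan is structurally right but the citation is to the wrong appendix, and the actual mechanism is quite different from your sketch.
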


The fact that $\delta_e\not\in\tracetD{}(\HD{\Omega})$ is illustrated in Appendix~\ref{sec_app2}.

\section{First variational formulation and DPG approximation} \label{sec_VF1}

Let us continue to develop a variational formulation of \eqref{prob}. Considering the
trace operator $\traceD{}$ from \S\ref{sec_trace1}, our preliminary formulation \eqref{VFa} now reads
\[
   \vdual{u}{\Delta \tau}_\cT + \vdual{\sigma}{\Delta v-\tau}_\cT
   - \dual{\traceD{}(\sigma)}{v}_\cS - \dual{\traceD{}(u)}{\tau}_\cS
   = \vdual{f}{v}.
\]
In this case, test functions $v$ and $\tau$ come from $\HD{\cT}$. Therefore, introducing
independent trace variables $\tsigma:=\traceD{}(\sigma)$, $\tu:=\traceD{}(u)$, and spaces
\[
   \UU_1 := L_2(\Omega)\times L_2(\Omega)\times \bHDzz{\cS} \times \bHD{\cS},\qquad
   \VV_1 := \HD{\cT}\times\HD{\cT}
\]
with respective norms
\begin{align*}
   \|(u,\sigma,\tu,\tsigma)\|_{\UU_1}^2
   &:=
   \|u\|^2 + \|\sigma\|^2 + \|\tu\|_\trD{\cS}^2 + \|\tsigma\|_\trD{\cS}^2,\qquad
   \|(v,\tau)\|_{\VV_1}^2
   :=
   \|v\|_{\Delta,\cT}^2 + \|\tau\|_{\Delta,\cT}^2,
\end{align*}
our first ultraweak variational formulation of \eqref{prob} is
\begin{align} \label{VF1}
   (u,\sigma,\tu,\tsigma)\in \UU_1:\quad
   b_1(u,\sigma,\tu,\tsigma;v,\tau) = L(v,\tau)
   \quad\forall (v,\tau)\in\VV_1,
\end{align}
in strong form written as $B_1(u,\sigma,\tu,\tsigma)=L\in\VV_1'$.
Here,
\begin{align} \label{b1}
   b_1(u,\sigma,\tu,\tsigma;v,\tau)
   :=
   \vdual{u}{\Delta \tau}_\cT + \vdual{\sigma}{\Delta v-\tau}_\cT
   - \dual{\tu}{\tau}_\cS
   - \dual{\tsigma}{v}_\cS,
\end{align}
\(
   L(v,\tau) := \vdual{f}{v},
\)
and $\dual{\cdot}{\cdot}_\cS$ refers to the duality between
$\bHD{\cS}$ (including $\bHDzz{\cS}$) and $\HD{\cT}$ implied by \eqref{trD_duality}.

\begin{theorem} \label{thm_stab1}
The operator $B_1:\;\UU_1\to\VV_1'$ is continuous and bounded below. In particular,
for any function $f\in L_2(\Omega)$, there exists
a unique and stable solution $(u,\sigma,\tu,\tsigma)\in \UU_1$ to \eqref{VF1},
\[
   \|u\| + \|\sigma\| + \|\tu\|_{\Delta,\cS} + \|\tsigma\|_{\Delta,\cS}
   \lesssim
   \|f\|
\]
with a hidden constant that is independent of $f$ and $\cT$.
Furthermore, \eqref{prob} and \eqref{VF1} are equivalent:
If $u\in H^2_0(\Omega)$ solves \eqref{prob} then
$\uu:=(u,\Delta u,\traceD{}(u),\traceD{}(\Delta u))$ solves \eqref{VF1};
and if $\uu=(u,\sigma,\tu,\tsigma)$ solves \eqref{VF1} then $u$ is element of $H^2_0(\Omega)$
and solves \eqref{prob}.
\end{theorem}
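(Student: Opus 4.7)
The plan is to split the argument into continuity, equivalence with \eqref{prob}, and the bounded-below estimate, from which uniqueness and the stability bound will follow.

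Continuity is immediate. The volume pairings in $b_1$ are estimated by Cauchy--Schwarz, and the skeleton pairings $\dual{\tu}{\tau}_\cS$ and $\dual{\tsigma}{v}_\cS$ are controlled via the duality-norm identity $\|\cdot\|_{\Delta',\cS}=\|\cdot\|_\trD{\cS}$ of Proposition~\ref{prop_D_trace}. This gives $\|B_1\uu\|_{\VV_1'}\lesssim\|\uu\|_{\UU_1}$ with constant one in each term.

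I would prove the equivalence with \eqref{prob} before the stability estimate, as it supplies the existence ingredient. The forward direction is routine: given $u\in H^2_0(\Omega)$ solving \eqref{prob} (which exists by the standard $H^2_0$ weak formulation), set $\sigma:=\Delta u$; since $\Delta\sigma=f\in L_2(\Omega)$ and $u\in H^2_0(\Omega)\subset\HDz{\Omega}$, both traces $\tu:=\traceD{}(u)$ and $\tsigma:=\traceD{}(\sigma)$ lie in the correct spaces and the element-wise integration by parts of \S\ref{sec_model} yields \eqref{VF1}. For the converse, let $\uu\in\UU_1$ solve \eqref{VF1}. Testing with $\tau\in\cD(T)$ extended by zero (for each $T\in\cT$, with $v=0$) gives $\sigma=\pwDelta u$ distributionally on $T$, so $u\in\HD{\cT}$; analogously, testing with $v\in\cD(T)$ gives $\pwDelta\sigma=f$. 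Testing now with arbitrary $\tau\in\HD{\cT}$ produces the identity $\vdual{u}{\Delta\tau}_\cT-\vdual{\pwDelta u}{\tau}_\cT=\dual{\tu}{\tau}_\cS$, from which Proposition~\ref{prop_D_jump}(i) forces first $u\in\HD{\Omega}$, then $\tu=\traceD{}(u)$, and since $\tu\in\bHDzz{\cS}$ also $u\in\HDz{\Omega}$. The analogous argument on $v$ yields $\sigma\in\HD{\Omega}$ and $\tsigma=\traceD{}(\sigma)$. Finally, the $H^2$-regularity result of Appendix~\ref{sec_reg} lifts $u\in\HDz{\Omega}$ with $\Delta^2 u=f\in L_2(\Omega)$ to $u\in H^2_0(\Omega)$.

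The principal obstacle is the bounded-below estimate. I would employ the standard DPG inf-sup strategy: given $\uu\in\UU_1$, construct $(v,\tau)\in\VV_1$ with $b_1(\uu;v,\tau)\gtrsim\|\uu\|_{\UU_1}^2$ and $\|(v,\tau)\|_{\VV_1}\lesssim\|\uu\|_{\UU_1}$. Because $b_1$ couples $u$ with $\Delta\tau$, $\sigma$ with $\Delta v-\tau$, and each trace variable with one test component, the natural choice is to take $(v,\tau)$ as the solution of a coupled auxiliary system whose strong form reduces, after elimination, to a biharmonic problem with $L_2$ data assembled from $(u,\sigma)$ together with duality representatives (via the Riesz maps on $\HD{\cT}$) of $(\tu,\tsigma)$; well-posedness of this auxiliary system rests on the biharmonic Dirichlet problem with $L_2$ right-hand side together with Propositions~\ref{prop_D_trace} and~\ref{prop_D_jump}. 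The delicate point is that the four components of $\uu$ must all be controlled simultaneously by one test pair, so the auxiliary problem has to be balanced carefully (in the spirit of the analysis in \cite{FuehrerHN_19_UFK}) to prevent cancellations between the off-diagonal volume terms and the two trace terms. Once bounded-belowness is in place, uniqueness follows from injectivity and the stability bound $\|\uu\|_{\UU_1}\lesssim\|f\|$ is then obtained by applying it to the solution produced by the equivalence with \eqref{prob}.
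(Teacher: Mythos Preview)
Your continuity argument and the forward direction of the equivalence are fine and agree with the paper. Two points need attention.

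First, a circularity: in the converse direction of the equivalence you invoke the regularity result of Appendix~\ref{sec_reg} to lift $u\in\HDz{\Omega}$ to $u\in H^2_0(\Omega)$, but in this paper Theorem~\ref{thm_ref} is proved \emph{as a corollary} of Theorem~\ref{thm_stab1}. The paper avoids this by establishing well-posedness of \eqref{VF1} first; then, since the $H^2_0(\Omega)$-solution of \eqref{prob} yields a solution of \eqref{VF1} and the latter is unique, any solution of \eqref{VF1} must have $u\in H^2_0(\Omega)$. You can repair your argument simply by postponing the converse direction until after uniqueness is in hand; only the forward direction is needed to supply existence.

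Second, and more substantially, your bounded-below argument is only a sketch: you do not exhibit the auxiliary problem, and you yourself flag that balancing the four components against cancellation is ``delicate''. The paper takes a different, structured route via the framework of \cite[Theorem~3.3]{CarstensenDG_16_BSF}, which decouples the issue by reducing the global inf-sup to three separately verifiable pieces: (a) an inf-sup for the volume bilinear form $b_1(u,\sigma,0,0;\cdot,\cdot)$ over $\HDz{\Omega}\times\HD{\Omega}$, obtained from the adjoint solvability of Lemma~\ref{la_adj2}; (b) an inf-sup for the trace pairings, which is precisely the norm identity of Proposition~\ref{prop_D_trace}; and (c) the identification of $\HDz{\Omega}\times\HD{\Omega}$ as the annihilator in $\VV_1$ of $\bHDzz{\cS}\times\bHD{\cS}$, which is Proposition~\ref{prop_D_jump}(i). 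The paper also verifies injectivity of $B_1^*$ directly (again via Proposition~\ref{prop_D_jump}(i) and a short biharmonic argument), so well-posedness follows from Babu\v{s}ka theory without relying on existence through \eqref{prob}. This separation is exactly what sidesteps the cancellation difficulty you anticipate.
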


For a proof of this theorem we refer to Section~\ref{sec_proofs}.

A DPG approximation with optimal test functions based on formulation \eqref{VF1} is as follows.
We select discrete spaces $\UU_{1,h}\subset\UU$ and test spaces $\VV_{1,h}:=\ttt_1(\UU_{1,h})\subset\VV_1$ where
$\ttt_1:\;\UU_1\to\VV_1$ is the \emph{trial-to-test operator} defined by
\[
   \ip{\ttt_1(\uu)}{\vv}_{\VV_1} = b_1(\uu,\vv)\quad\forall\vv\in\VV_1.
\]
Here, $\ip{\cdot}{\cdot}_{\VV_1}$ is the inner product in $\VV_1$ that generates the selected norm
$\bigl(\|\cdot\|_{\Delta,\cT}^2+\|\cdot\|_{\Delta,\cT}^2\bigr)^{1/2}$.

Then, an approximation $\uu_h=(u_h,\sigma_h,\tu_h,\tsigma_h)\in\UU_{1,h}$ is defined as the solution to
\begin{align} \label{DPG1}
   b_1(\uu_h,\vv) = L(\vv) \quad\forall\vv\in\VV_{1,h}.
\end{align}
Being a minimum residual method it delivers the best approximation of the exact solution
in the residual norm $\|B_1(\cdot)\|_{\VV_1'}$, cf., e.g.,~\cite{DemkowiczG_11_ADM}.
Then, using the equivalence of the norms $\|B_1(\cdot)\|_{\VV_1'}$ and $\|\cdot\|_{\UU_1}$
stated by Theorem~\ref{thm_stab1}, we obtain its quasi-optimal convergence in the latter norm.

\begin{theorem} \label{thm_DPG1}
Let $f\in L_2(\Omega)$ be given and let $\uu$ be the solution of \eqref{VF1}.
For any finite-dimensional subspace $\UU_{1,h}\subset\UU_1$
there exists a unique solution $\uu_h\in\UU_{1,h}$ to \eqref{DPG1}. It satisfies the quasi-optimal
error estimate
\[
   \|\uu-\uu_h\|_{\UU_1} \lesssim \|\uu-\ww\|_{\UU_1}
   \quad\forall\ww\in\UU_{1,h}
\]
with a hidden constant that is independent of $f$, $\cT$ and $\UU_{1,h}$.
\end{theorem}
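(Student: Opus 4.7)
The plan is to invoke the standard abstract framework for the DPG method with optimal test functions (as developed by Demkowicz and Gopalakrishnan), using Theorem~\ref{thm_stab1} as the essential well-posedness input. Concretely, the well-posedness result already states that $B_1:\UU_1\to\VV_1'$ is a bounded isomorphism with $\|B_1(\ww)\|_{\VV_1'} \simeq \|\ww\|_{\UU_1}$ for every $\ww\in\UU_1$, and this equivalence is the only analytic ingredient specific to our problem; everything else follows from abstract minimum-residual theory.

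First, I would observe that by the definition of the trial-to-test operator $\ttt_1$ via the Riesz representation in $\VV_1$, one has the identity $\|\ttt_1(\ww)\|_{\VV_1} = \|B_1(\ww)\|_{\VV_1'}$ for all $\ww\in\UU_1$, and $b_1(\ww,\ttt_1(\ww)) = \|\ttt_1(\ww)\|_{\VV_1}^2$. Consequently, for any finite-dimensional $\UU_{1,h}\subset\UU_1$, taking $\VV_{1,h}=\ttt_1(\UU_{1,h})$ yields a discrete inf-sup condition with constant $1$:
\begin{equation*}
   \sup_{0\neq\vv\in\VV_{1,h}} \frac{b_1(\ww,\vv)}{\|\vv\|_{\VV_1}}
   \;=\; \|\ttt_1(\ww)\|_{\VV_1} \;=\; \|B_1(\ww)\|_{\VV_1'}
   \;\gtrsim\; \|\ww\|_{\UU_1} \qquad \forall\,\ww\in\UU_{1,h},
\end{equation*}
where the final bound uses Theorem~\ref{thm_stab1}. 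This already yields existence and uniqueness of the discrete solution $\uu_h\in\UU_{1,h}$ to \eqref{DPG1}.

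Next, I would exploit the minimum-residual interpretation: the DPG scheme \eqref{DPG1} is equivalent to the minimization
\begin{equation*}
   \uu_h = \argmin_{\ww\in\UU_{1,h}} \|L - B_1(\ww)\|_{\VV_1'}^2.
\end{equation*}
Since $L = B_1(\uu)$ by \eqref{VF1}, this means $\uu_h$ is the best approximation to $\uu$ in the residual norm $\|B_1(\uu-\cdot)\|_{\VV_1'}$. Using the two-sided equivalence between this residual norm and $\|\cdot\|_{\UU_1}$ from Theorem~\ref{thm_stab1} (boundedness gives one direction, the bounded-below property gives the other) produces
\begin{equation*}
   \|\uu-\uu_h\|_{\UU_1}
   \;\lesssim\; \|B_1(\uu-\uu_h)\|_{\VV_1'}
   \;\le\; \|B_1(\uu-\ww)\|_{\VV_1'}
   \;\lesssim\; \|\uu-\ww\|_{\UU_1}
   \qquad \forall\,\ww\in\UU_{1,h},
\end{equation*}
which is the claimed quasi-optimal estimate. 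The hidden constant depends only on the continuity and bounded-below constants from Theorem~\ref{thm_stab1}, hence is independent of $f$, $\cT$ and $\UU_{1,h}$.

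There is no real obstacle: the theorem is a direct corollary of Theorem~\ref{thm_stab1} and the generic DPG quasi-optimality argument. The only point that would deserve a careful sentence is the verification that no hidden mesh-dependence enters through $\ttt_1$ or the norm on $\VV_1$; this is immediate since $\|\cdot\|_{\VV_1}$ was chosen as the canonical broken graph norm on $\HD{\cT}\times\HD{\cT}$, and both the continuity and the inf-sup constants delivered by Theorem~\ref{thm_stab1} are stated to be mesh-independent.
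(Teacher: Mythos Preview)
Your proposal is correct and follows essentially the same approach as the paper: the paper simply notes (in the paragraph preceding Theorem~\ref{thm_DPG1}) that the DPG scheme is a minimum residual method delivering the best approximation in the residual norm $\|B_1(\cdot)\|_{\VV_1'}$, and then invokes the norm equivalence $\|B_1(\cdot)\|_{\VV_1'}\simeq\|\cdot\|_{\UU_1}$ from Theorem~\ref{thm_stab1}. You give more detail (the explicit discrete inf--sup via $\ttt_1$ and the mesh-independence remark), but the argument is the same.
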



\section{Second variational formulation and DPG approximation} \label{sec_VF2}

Let us reconsider the preliminary formulation \eqref{VFa}.
We make use of the regularity $u\in H^2_0(\Omega)$. Then, the variable $\tu$ replaces
$\traceDt{}(u)\in\bHDtzz{\cS}$ instead of $\traceD{}(u)\in\bHDzz{\cS}$.
We then use test functions $v\in H^2(\cT)$ instead of $v\in\HD{\cT}$.
This means that we have a trace $\traceDD{}\circ\diagtensor(\Delta u)\in\bH^{-3/2,-1/2}(\cS)$
(cf.~Lemma~\ref{la_trtD}) rather than $\traceD{}(\Delta u)\in\bHD{\cS}$.
This corresponds to using the spaces
\[
   \UU_2 := L_2(\Omega)\times L_2(\Omega)\times \bHDtzz{\cS} \times \bH^{-3/2,-1/2}(\cS),\qquad
   \VV_2 := H^2(\cT)\times\HD{\cT}
\]
with respective norms
\begin{align*}
   \|(u,\sigma,\tu,\tsigma)\|_{\UU_2}^2
   &:=
   \|u\|^2 + \|\sigma\|^2 + \|\tu\|_{2,\cS}^2 + \|\tsigma\|_\trddiv{\cS}^2,\qquad
   \|(v,\tau)\|_{\VV_2}^2
   :=
   \|v\|_{2,\cT}^2 + \|\tau\|_{\Delta,\cT}^2.
\end{align*}
The corresponding ultraweak variational formulation is:
\begin{align} \label{VF2}
   (u,\sigma,\tu,\tsigma)\in \UU_2:\quad
   b_2(u,\sigma,\tu,\tsigma;v,\tau) = L(v,\tau)
   \quad\forall (v,\tau)\in\VV_2.
\end{align}
Here, the bilinear form $b_2$ is defined similarly as $b_1$ in \eqref{b1}, namely
\begin{align*}
   b_2(u,\sigma,\tu,\tsigma;v,\tau)
   :=
   \vdual{u}{\Delta \tau}_\cT + \vdual{\sigma}{\Delta v-\tau}_\cT
   - \dual{\tu}{\tau}_\cS
   + \dual{\tsigma}{v}_\cS.
\end{align*}
Specifically, the duality $\dual{\tu}{\tau}_\cS$ is the one induced by \eqref{trDt_duality}
analogously as before, and $\dual{\tsigma}{v}_\cS$ is defined by \eqref{tr_dd_dual}.
For consistency with trace definitions we have changed the sign in front of the latter
duality, cf.~Lemma~\ref{la_trtD}.

We refer to the strong form of \eqref{VF2} as $B_2(u,\sigma,\tu,\tsigma)=L\in\VV_2'$.

\begin{theorem} \label{thm_stab2}
The operator $B_2:\;\UU_2\to\VV_2'$ is continuous and bounded below. In particular,
for any function $f\in L_2(\Omega)$, there exists
a unique solution $(u,\sigma,\tu,\tsigma)$ of \eqref{VF2}. It holds the bound
\[
   \|u\| + \|\sigma\| + \|\tu\|_{2,\cS} + \|\tsigma\|_{\trddiv{\cS}} \lesssim \|f\|
\]
with a hidden constant that is independent of $f$ and $\cT$.
Furthermore, \eqref{prob} and \eqref{VF2} are equivalent:
If $u\in H^2_0(\Omega)$ solves \eqref{prob} then
$\uu:=(u,\Delta u,\traceDt{}(u),\traceDD{}(\diagtensor(\Delta u)))$ solves \eqref{VF2};
and if $\uu=(u,\sigma,\tu,\tsigma)$ solves \eqref{VF2} then $u\in H^2_0(\Omega)$ solves \eqref{prob}.
\end{theorem}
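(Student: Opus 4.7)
My plan is to prove Theorem~\ref{thm_stab2} along the three-step pattern common to ultraweak DPG theory: continuity of $B_2$, the equivalence $\eqref{prob}\Leftrightarrow\eqref{VF2}$, and the bounded-below (inf-sup) estimate $\|\uu\|_{\UU_2}\lesssim\|B_2\uu\|_{\VV_2'}$. Existence of a solution to \eqref{VF2} then follows from the equivalence together with solvability of \eqref{prob} in $H^2_0(\Omega)$ (Appendix~\ref{sec_reg}), while uniqueness and the stability bound follow from the bounded-below estimate and $|L(v,\tau)|\le\|f\|\,\|v\|_{2,\cT}$.

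Continuity of $B_2$ is direct: the two volume terms are handled by Cauchy--Schwarz together with $\|\Delta\tau\|_\cT\le\|\tau\|_{\Delta,\cT}$ and $\|\Delta v\|_\cT\le\|v\|_{2,\cT}$, and the two skeleton dualities by Propositions~\ref{prop_Dt_trace} and~\ref{prop_dd_trace}, yielding $|\dual{\tu}{\tau}_\cS|\le\|\tu\|_{2,\cS}\|\tau\|_{\Delta,\cT}$ and $|\dual{\tsigma}{v}_\cS|\le\|\tsigma\|_\trddiv{\cS}\|v\|_{2,\cT}$. For the equivalence, the forward direction is a twofold element-wise integration by parts using~\eqref{trDT} and~\eqref{trT_dd}, together with the identity $\diagtensor(\sigma):\Grad\grad v=\sigma\,\Delta v$ for $\sigma:=\Delta u$. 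For the converse I would test first with $v=0$, $\tau\in\mathcal{D}(T)$ and then with $\tau=0$, $v\in\mathcal{D}(T)$ to recover $\Delta u|_T=\sigma|_T$ and $\Delta\sigma|_T=f|_T$ elementwise; the remaining skeleton identities, combined with Propositions~\ref{prop_tDt_jump} and~\ref{prop_gg_jump}, then promote $u\in H^2_0(\Omega)$ and $\sigma\in\HD{\Omega}$, identify $\tu=\traceDt{}(u)$ and $\tsigma=\traceDD{}(\diagtensor(\sigma))$, and deliver $\Delta^2u=f$.

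The main obstacle is the bounded-below estimate. My plan is to invoke the DPG breaking-spaces framework of~\cite{CarstensenDG_16_BSF}, deployed for Kirchhoff--Love problems in~\cite{FuehrerHN_19_UFK}: it reduces the broken inf-sup to its unbroken, conforming counterpart on the test space $H^2_0(\Omega)\times\HD{\Omega}$, provided that the trace spaces $\bHDtzz{\cS}$ and $\bH^{-3/2,-1/2}(\cS)$ are dual to the quotient of $\VV_2$ by its conforming subspace. The conforming inf-sup I would prove by explicit construction: given $(u,\sigma)\in L_2\times L_2$, solve the weak biharmonic problem $\Delta^2v=u+\Delta\sigma$ for $v\in H^2_0(\Omega)$, which is well-posed from $H^{-2}(\Omega)$ to $H^2_0(\Omega)$ and satisfies $\|v\|_2\lesssim\|u\|+\|\sigma\|$ by the $H^2$-regularity of Appendix~\ref{sec_reg}; then $\tau:=\Delta v-\sigma\in\HD{\Omega}$ satisfies $\|\tau\|_\Delta\lesssim\|u\|+\|\sigma\|$ and $\vdual{u}{\Delta\tau}+\vdual{\sigma}{\Delta v-\tau}=\|u\|^2+\|\sigma\|^2$. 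For the trace components, Proposition~\ref{prop_gg_jump} identifies $H^2_0(\Omega)$ as the annihilator of $\bH^{-3/2,-1/2}(\cS)$ inside $H^2(\cT)$, with the sharp duality norm $\|\tsigma\|_\trddiv{\cS}$ supplied by Proposition~\ref{prop_dd_trace}; Proposition~\ref{prop_tDt_jump} plays the analogous role for $\tau$, with Proposition~\ref{prop_Dt_trace} supplying $\|\tu\|_{2,\cS}$. Together these yield control of $\|\tu\|_{2,\cS}^2+\|\tsigma\|_\trddiv{\cS}^2$ by $\|B_2\uu\|_{\VV_2'}^2$ up to the $L_2$-contributions, and a standard orthogonal-decomposition argument combines all four estimates.

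The subtlety specific to this formulation, namely that $\traceDD{}\circ\diagtensor$ is not surjective onto $\bH^{-3/2,-1/2}(\cS)$ (Lemma~\ref{la_trtD}), is irrelevant for the inf-sup because the pairing $\dual{\tsigma}{v}_\cS$ only tests $\tsigma$ against $v\in H^2(\cT)$, against which the full space $\bH^{-3/2,-1/2}(\cS)$ acts non-degenerately. This is in fact the whole point of the enlargement introduced in \S\ref{sec_trace2}, and it is the step where I expect the analysis to require the most care: the orthogonal decomposition of $H^2(\cT)$ with respect to $H^2_0(\Omega)$ must match exactly the dual pairing with $\bH^{-3/2,-1/2}(\cS)$, and the matching in turn relies on Proposition~\ref{prop_gg_jump} being available for \emph{all} of $\bH^{-3/2,-1/2}(\cS)$ rather than only its (non-closed) subset $\tracetD{}(\HD{\Omega})$.
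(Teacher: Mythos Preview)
Your proposal is correct and follows essentially the same route as the paper: the Carstensen--Demkowicz--Gopalakrishnan breaking-spaces framework, with the conforming inf-sup established via the biharmonic problem (the paper packages this as Lemma~\ref{la_adj2} and then dualizes, but the content is identical to your direct construction), and the trace ingredients supplied by Propositions~\ref{prop_Dt_trace}, \ref{prop_dd_trace}, \ref{prop_tDt_jump}, \ref{prop_gg_jump}. Two minor remarks: the well-posedness of $\Delta^2 v=u+\Delta\sigma$ in $H^2_0(\Omega)$ for right-hand side in $H^{-2}(\Omega)$ is plain Lax--Milgram and does not need Appendix~\ref{sec_reg}; and where you obtain existence via the equivalence with \eqref{prob}, the paper instead verifies injectivity of $B_2^*$ directly (same propositions, plus Lemma~\ref{la_adj2} with $g_1=g_2=0$), which is a slightly cleaner way to conclude surjectivity of $B_2$ without appealing to solvability of the PDE.
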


A proof of this result is given in Section~\ref{sec_proofs}.

The corresponding DPG approximation uses discrete spaces $\UU_{2,h}\subset\UU_2$
and test spaces $\VV_{2,h}:=\ttt_2(\UU_{2,h})\subset\VV_2$ where the trial-to-test operator
$\ttt_2:\;\UU_2\to\VV_2$ is defined by
\[
   \ip{\ttt_2(\uu)}{\vv}_{\VV_2} = b_2(\uu,\vv)\quad\forall\vv\in\VV_2
\]
with inner product $\ip{\cdot}{\cdot}_{\VV_2}$ that induces the norm
$\bigl(\|\cdot\|_{2,\cT}^2+\|\cdot\|_{\Delta,\cT}^2\bigr)^{1/2}$ in $\VV_2$.
The approximation $\uu_h=(u_h,\sigma_h,\tu_h,\tsigma_h)\in\UU_{2,h}$ is defined analogously as before,
\begin{align} \label{DPG2}
   b_2(\uu_h,\vv) = L(\vv) \quad\forall\vv\in\VV_{2,h}.
\end{align}
Again, this scheme converges quasi-optimally, see Theorem~\ref{thm_DPG1}.

\begin{theorem} \label{thm_DPG2}
Let $f\in L_2(\Omega)$ be given and let $\uu$ be the solution of \eqref{VF2}.
For any finite-dimensional subspace $\UU_{2,h}\subset\UU_2$
there exists a unique DPG approximation $\uu_h=(u_h,\sigma_h,\tu_h,\tsigma_h)\in\UU_{2,h}$
defined by \eqref{DPG2}.
It satisfies the quasi-optimal error estimate
\[
   \|\uu-\uu_h\|_{\UU_2} \lesssim \|\uu-\ww\|_{\UU_2} \quad\forall\ww\in\UU_{2,h}
\]
with a hidden constant that is independent of $f$, $\cT$ and $\UU_{2,h}$.
\end{theorem}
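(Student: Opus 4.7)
The plan is to deduce Theorem~\ref{thm_DPG2} from Theorem~\ref{thm_stab2} through the standard abstract DPG framework, following the same template as Theorem~\ref{thm_DPG1}. The argument is essentially a black-box application of the results of Demkowicz--Gopalakrishnan~\cite{DemkowiczG_11_ADM}: once one has a continuous bilinear form together with the bounded-below property of the induced operator $B_2$, the DPG scheme with optimal test functions inherits quasi-optimality in the trial-space norm.

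More concretely, I would proceed as follows. First, observe that $b_2:\UU_2\times\VV_2\to\R$ is continuous by inspection of its four terms, using the Cauchy--Schwarz inequality on the volume pairings and the duality bounds for $\dual{\tu}{\tau}_\cS$ and $\dual{\tsigma}{v}_\cS$ (which are bounded by $\|\tu\|_{2,\cS}\|\tau\|_{\Delta,\cT}$ and $\|\tsigma\|_{\trddiv{\cS}}\|v\|_{2,\cT}$ respectively by Propositions~\ref{prop_Dt_trace} and~\ref{prop_dd_trace}). Next, by definition of $\ttt_2$ via the Riesz representation in $(\VV_2,\ip{\cdot}{\cdot}_{\VV_2})$, one has
\[
   \|\ttt_2(\uu)\|_{\VV_2} = \sup_{0\neq \vv\in\VV_2} \frac{b_2(\uu,\vv)}{\|\vv\|_{\VV_2}} = \|B_2(\uu)\|_{\VV_2'},
\]
so the discrete problem \eqref{DPG2} is equivalent to the minimum residual problem
\[
   \uu_h = \argmin_{\ww\in\UU_{2,h}} \|L - B_2(\ww)\|_{\VV_2'}.
\]
Since $L = B_2(\uu)$, this yields $\|B_2(\uu-\uu_h)\|_{\VV_2'} \le \|B_2(\uu-\ww)\|_{\VV_2'}$ for every $\ww\in\UU_{2,h}$.

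Finally, I would invoke the continuity and bounded-below property from Theorem~\ref{thm_stab2}, which together state that $\|B_2(\cdot)\|_{\VV_2'}\simeq \|\cdot\|_{\UU_2}$ on $\UU_2$ (with constants depending neither on $f$ nor on $\cT$). Combining this norm equivalence with the minimum residual estimate above gives
\[
   \|\uu-\uu_h\|_{\UU_2} \lesssim \|B_2(\uu-\uu_h)\|_{\VV_2'} \le \|B_2(\uu-\ww)\|_{\VV_2'} \lesssim \|\uu-\ww\|_{\UU_2}
\]
for every $\ww\in\UU_{2,h}$, which is the desired quasi-optimal bound. Existence and uniqueness of $\uu_h$ follow from the fact that $\ttt_2$ is injective on $\UU_{2,h}$ (a consequence of the bounded-below property of $B_2$), so the Gram-type system defining $\uu_h$ is nonsingular.

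There is no real obstacle here: the only nontrivial input is Theorem~\ref{thm_stab2} itself, whose proof is deferred to Section~\ref{sec_proofs}. Everything else is an abstract consequence of the DPG framework, and the proof can essentially be written as ``this is \cite[Theorem~2.1]{DemkowiczG_11_ADM} applied to $B_2$, whose hypotheses are verified by Theorem~\ref{thm_stab2}.''
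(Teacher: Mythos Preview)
Your proposal is correct and matches the paper's approach: the paper does not give an explicit proof of Theorem~\ref{thm_DPG2} but simply refers back to Theorem~\ref{thm_DPG1}, whose justification is exactly the minimum-residual argument from~\cite{DemkowiczG_11_ADM} combined with the norm equivalence $\|B_2(\cdot)\|_{\VV_2'}\simeq\|\cdot\|_{\UU_2}$ from Theorem~\ref{thm_stab2}. You have spelled out precisely this black-box DPG reasoning.
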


\section{Proofs of Theorems~\ref{thm_stab1} and~\ref{thm_stab2}} \label{sec_proofs}

We start by showing unique and stable solvability of the (self) adjoint problem to \eqref{prob},
with continuous spaces.

\begin{lemma} \label{la_adj2}
For given $g_1,g_2\in L_2(\Omega)$, there exists a unique solution $(v,\tau)\in H^2_0(\Omega)\times\HD{\Omega}$
of
\begin{subequations} \label{adj}
\begin{alignat}{3}
    \Delta v - \tau  &= g_1 &&\quad\text{in}\ \Omega,   \label{a1}\\
    \Delta\tau       &= g_2 &&\quad\text{in}\ \Omega.   \label{a2}
\end{alignat}
\end{subequations}
It satisfies
\[
   \|v\|_2 + \|\tau\|_\Delta \lesssim \|g_1\| + \|g_2\|
\]
with a constant that is independent of $g_1$, $g_2$ and $\cT$.
\end{lemma}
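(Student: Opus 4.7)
The plan is to decouple the two equations and reduce the problem to the standard biharmonic Dirichlet problem. Starting from \eqref{a1}, we substitute $\tau = \Delta v - g_1$ into \eqref{a2} to obtain the fourth-order equation
\[
    \Delta^2 v = g_2 + \Delta g_1 \quad \text{in } \Omega,
\]
which we want to solve with $v \in H^2_0(\Omega)$. Although $\Delta g_1$ is merely a distribution when $g_1\in L_2(\Omega)$, duality gives $\Delta g_1 \in H^{-2}(\Omega)$ with $\|\Delta g_1\|_{H^{-2}} \le \|g_1\|$, since $\<\Delta g_1, w\>=\vdual{g_1}{\Delta w}$ for $w\in H^2_0(\Omega)$ and $\|\Delta w\|\le \|w\|_2$.

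Next I would solve the corresponding weak problem: find $v \in H^2_0(\Omega)$ with
\[
    \vdual{\Delta v}{\Delta w} = \vdual{g_2}{w} + \vdual{g_1}{\Delta w} \quad \forall w \in H^2_0(\Omega).
\]
The bilinear form on the left is coercive on $H^2_0(\Omega)$ by virtue of the identity $\|\Delta w\| = \|\Grad\grad w\|$ for $w \in H^2_0(\Omega)$ (cited in \eqref{eq_Dt}) combined with the Poincaré inequality, so Lax--Milgram yields a unique $v\in H^2_0(\Omega)$ with
\[
    \|v\|_2 \lesssim \|g_1\| + \|g_2\|.
\]
Now I define $\tau := \Delta v - g_1 \in L_2(\Omega)$; this automatically satisfies \eqref{a1}. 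To verify \eqref{a2}, I test against $\varphi \in \cD(\Omega)$ and use the weak form above with $w=\varphi$ to obtain
\[
    \vdual{\tau}{\Delta\varphi} = \vdual{\Delta v}{\Delta\varphi} - \vdual{g_1}{\Delta\varphi}
    = \vdual{g_2}{\varphi},
\]
so $\Delta \tau = g_2$ in the distributional sense, whence $\tau \in \HD{\Omega}$ with $\|\tau\|_\Delta \lesssim \|v\|_2 + \|g_1\| + \|g_2\|$.

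Uniqueness follows directly: if $g_1 = g_2 = 0$, then $\Delta v = \tau$ and $\Delta \tau = 0$, so $\Delta^2 v = 0$ in $\Omega$ with $v\in H^2_0(\Omega)$, which forces $v=0$ and hence $\tau=0$. I do not foresee a main obstacle here; the only subtlety is that the right-hand side $g_2 + \Delta g_1$ lives only in $H^{-2}(\Omega)$, but this is precisely the natural space on which the biharmonic operator $\Delta^2:H^2_0(\Omega)\to H^{-2}(\Omega)$ is an isomorphism, so the substitution argument closes cleanly.
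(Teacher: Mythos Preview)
Your proposal is correct and follows essentially the same route as the paper: reduce to the biharmonic problem $\Delta^2 v = g_2 + \Delta g_1$ in $H^2_0(\Omega)$, solve it weakly via Lax--Milgram using the identity \eqref{eq_Dt} and Poincar\'e, then recover $\tau=\Delta v - g_1$. The paper is slightly terser (it does not spell out the $H^{-2}$ interpretation of $\Delta g_1$ or the separate uniqueness argument), but the argument is the same.
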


\begin{proof}
We write a variational formulation for $v$. Applying $\Delta$ to \eqref{a1} and using \eqref{a2},
this gives the relation
\[
   \Delta(\Delta v-g_1) = g_2\quad\text{in}\quad L_2(\Omega).
\]
Testing with $\deltav\in H^2_0(\Omega)$ and integrating by parts we see that $v\in H^2_0(\Omega)$ solves
\[
   \vdual{\Delta v}{\Delta\deltav}
   =
   \vdual{g_1}{\Delta\deltav} + \vdual{g_2}{\deltav}
   \quad\forall\deltav\in H^2_0(\Omega).
\]
By standard arguments, this problem has a unique solution with bound
\[
   \|\Grad\grad v\|^2 = \|\Delta v\|^2
   \le
   \Bigl(\|g_1\|^2 + \|g_2\|^2\Bigr)^{1/2}
   \|v\|_\Delta.
\]
Here, we made use of \eqref{eq_Dt}.
Using Poincar\'e's inequality $\|v\|\lesssim \|\Grad\grad v\|$ we conclude that
\[
   \|v\|_2 \lesssim \|g_1\| + \|g_2\|.
\]
A unique solution $(v,\tau)$ of \eqref{adj} is then obtained by setting $\tau:=\Delta v-g_1$,
with bound
\[
   \|\tau\| + \|\Delta\tau\|
   =
   \|\Delta v-g_1\| + \|g_2\|
   \lesssim
   \|g_1\| + \|g_2\|.
\]
This finishes the proof.
\end{proof}

\subsection{Proof of Theorem~\ref{thm_stab1}.}

\paragraph{Well-posedness of \eqref{VF1}.}
We check the standard conditions. The boundedness of $b_1$ and $L$ holds by definition of the norms
in $\UU_1$ and $\VV_1$.

The injectivity of the adjoint operator $B_1^*:\;\VV_1\to\UU_1'$ can be seen as follows.
Let $(v,\tau)\in\VV_1$ be such that $b_1(\uu;v,\tau)=0$ for any $\uu=(u,\sigma,\tu,\tsigma)\in\UU_1$.
The selection of $\uu=(0,0,\tu,0)$ for any $\tu\in\bHDzz{\cS}$ reveals that $\tau\in\HD{\Omega}$
by Proposition~\ref{prop_D_jump}(i). Analogously,
selecting $\uu=(0,0,0,\tsigma)$ with arbitrary $\tsigma\in\bHD{\cS}$, Proposition~\ref{prop_D_jump}(i)
shows that $v\in\HDz{\Omega}$. We conclude that $(v,\tau)\in\HDz{\Omega}\times\HD{\Omega}$ solves
$\tau=\Delta v$ and $\Delta\tau=0$. It follows that $\Delta^2 v = 0$. Since $v\in\HDz{\Omega}$,
so that $\traceD{\Omega}(v)=0$, relation \eqref{trace_Omega} shows that
$\|\Delta v\|^2=\vdual{\Delta^2 v}{v}=0$. In particular, $\tau=\Delta v=0$.
Now, defining $z\in H^2_0(\Omega)$ as the solution to $\Delta^2 z=v$, and again
using \eqref{trace_Omega}, we find that
\(
   \vdual{v}{v}=\vdual{v}{\Delta^2 z}=\vdual{\Delta v}{\Delta z}=0,
\)
that is, $v=0$.

It remains to check the inf--sup condition
\begin{equation} \label{infsup_D}
   \|B_1\uu\|_{\VV_1'} \gtrsim \|\uu\|_{\UU_1}\quad\forall\uu\in\UU_1.
\end{equation}
To this end we employ the technique proposed by Carstensen {\em et al.} in \cite{CarstensenDG_16_BSF}.
To simplify reading let use relate our notation to the one in \cite{CarstensenDG_16_BSF}.
\begin{align*}
   &X=\UU_1,\quad X_0 = L_2(\Omega)\times L_2(\Omega),\quad \hat X=\bHDzz{\cS}\times\bHD{\cS},\\
   &Y=\VV_1,\quad Y_0=\HDz{\Omega}\times\HD{\Omega},\quad b(\cdot,\cdot)=b_1(\cdot,\cdot),\\
   &b_0(x,y)=b_1(u,\sigma,0,0;v,\tau)=\vdual{u}{\Delta \tau}_\cT + \vdual{\sigma}{\Delta v-\tau}_\cT\quad
    \text{with $x=(u,\sigma)$, $y=(v,\tau)$},\\
   &\hat b(\hat x,y)=b_1(0,0,\tu,\tsigma;v,\tau)= -\dual{\tu}{\tau}_\cS - \dual{\tsigma}{v}_\cS\quad
    \text{with $\hat x=(\tu,\tsigma)$, $y=(v,\tau)$}.
\end{align*}
According to \cite[Theorem~3.3]{CarstensenDG_16_BSF} it suffices to show the two inf--sup properties
\begin{align}
   \label{infsup1}
   &\text{\cite[Ass.~3.1]{CarstensenDG_16_BSF}:}\
   \sup_{0\not=(v,\tau)\in\HDz{\Omega}\times\HD{\Omega}}
   \frac{b_1(u,\sigma,0,0;v,\tau)}{\|(v,\tau)\|_{\VV_1}}
   \gtrsim \|u\| + \|\sigma\|
   \quad\forall u, \sigma\in L_2(\Omega),
   \\
   \label{infsup2}
   &\text{\cite[(18)]{CarstensenDG_16_BSF}:}\qquad
   \sup_{0\not=(v,\tau)\in\VV_1}
   \frac{\dual{\tu}{\tau}_\cS + \dual{\tsigma}{v}_\cS}{\|(v,\tau)\|_{\VV_1}}
   \gtrsim
   \|\tu\|_\trD{\cS} + \|\tsigma\|_\trD{\cS}
   \nonumber\\
   &\hspace*{0.55\textwidth} \forall (\tu,\tsigma)\in \bHDzz{\cS}\times\bHD{\cS},
\end{align}
and the identity
\[
   \HDz{\Omega}\times\HD{\Omega}
   =
   \{(v,\tau)\in\VV_1;\; \dual{\tu}{\tau}_\cS + \dual{\tsigma}{v}_\cS = 0\
                         \forall (\tu,\tsigma)\in \bHDzz{\cS}\times\bHD{\cS}\}.
\]
This identity is true by Proposition~\ref{prop_D_jump}.
Lemma~\ref{la_adj2} shows that \eqref{infsup1} holds:
\begin{align} \label{B1_below}
   \Bigl(\|u\|^2 + \|\sigma\|^2\Bigr)^{1/2}
   &=
   \sup_{0\not=(g_1,g_2)\in L_2(\Omega)\times L_2(\Omega)}
   \frac {\vdual{u}{g_1} + \vdual{\sigma}{g_2}}
         {(\|g_1\|^2+\|g_2\|^2)^{1/2}}
   \nonumber\\
   &\lesssim
   \sup_{0\not=(v,\tau)\in H^2_0(\Omega)\times\HD{\Omega}}
   \frac {b_1(u,\sigma,0,0;v,\tau)}
         {(\|v\|_2^2+\|\tau\|_\Delta^2)^{1/2}}
   \\
   \nonumber
   &\le
   \sup_{0\not=(v,\tau)\in \HDz{\Omega}\times\HD{\Omega}}
   \frac {b_1(u,\sigma,0,0;v,\tau)}
         {\|(v,\tau)\|_{\VV_1}}
   \quad\forall u, \sigma\in L_2(\Omega).
\end{align}
Finally, Proposition~\ref{prop_D_trace} shows that
\eqref{infsup2} is satisfied. This finishes the proof of \eqref{infsup_D}, and of the theorem.

\paragraph{Equivalence of \eqref{prob} and \eqref{VF1}.}
By construction of \eqref{VF1}, any solution $u\in H^2_0(\Omega)$ of \eqref{prob} provides
a solution $\uu:=(u,\Delta u,\traceD{}(u),\traceD{}(\Delta u))\in\UU_1$ of \eqref{VF1}.
In fact, the regularity $u\in\HDz{\Omega}$, together with $f\in L_2(\Omega)$,
is sufficient for this conclusion.

To see the other direction we use that \eqref{VF1} is uniquely solvable.
Its solution $\uu=(u,\sigma,\tu,\tsigma)$ satisfies $u\in \HDz{\Omega}$ and solves $\Delta^2u = f$ in $\Omega$,
as can be seen as follows. Selecting smooth test functions $v$ and $\tau$ with supports on individual
elements, one obtains $\sigma=\pwDelta u$ and $\pwDelta\sigma=f$, first in the distributional sense
and then in $L_2(\Omega)$ by the regularity $\sigma,f\in L_2(\Omega)$.
Second, denoting as usual $\tu=(\tu_T)_T$, $\tsigma=(\tsigma_T)_T$,
and using test functions $v,\tau\in\cD(\overline{T})$ for $T\in\cT$, one concludes that
$\tu_T=\traceD{T}(u)$ and $\tsigma_T=\traceD{T}(\sigma)$ for any $T\in\cT$
so that $u\in\HDz{\Omega}$ and $\sigma\in\HD{\Omega}$ by Proposition~\ref{prop_D_jump}.
Altogether, $u\in\HDz{\Omega}$ solves $\Delta^2u=f$.
Since any such function $u$ leads to a solution of \eqref{VF1}, as noted before, one concludes
the stronger regularity $u\in H^2_0(\Omega)$ by uniqueness of \eqref{VF1}. Therefore,
$u\in H^2_0(\Omega)$ solves \eqref{prob}.

\subsection{Proof of Theorem~\ref{thm_stab2}.}

The proof of Theorem~\ref{thm_stab2} is analogous to the one of Theorem~\ref{thm_stab1}.
The equivalence between \eqref{prob} and \eqref{VF2} holds as before. To show the 
well-posedness of \eqref{VF2} we repeat the steps that show the well-posedness of \eqref{VF1}
where we only have to replace the corresponding ingredients. Specifically, the injectivity
of $B_2^*:\;\VV_2\to\UU_2'$ is obtained by using
Propositions~\ref{prop_tDt_jump} and~\ref{prop_gg_jump} instead of Proposition~\ref{prop_D_jump}(i)
to deduce the continuity $(v,\tau)\in H^2_0(\Omega)\times\HD{\Omega}$ of
$(v,\tau)\in\VV_2$ satisfying $b_2(\uu;v,\tau)=0$ $\forall\uu\in\UU_2$.
Then Lemma~\ref{la_adj2} shows that $(v,\tau)=0$.

The inf--sup condition for $B_2$, corresponding to \eqref{infsup_D}, is shown by
the same framework, based on the two inf--sup conditions
\begin{align}
   \label{infsup1b}
   &\sup_{0\not=(v,\tau)\in H^2_0(\Omega)\times\HD{\Omega}}
   \frac{b_2(u,\sigma,0,0;v,\tau)}{\|(v,\tau)\|_{\VV_2}}
   \gtrsim \|u\| + \|\sigma\|
   \quad\forall u, \sigma\in L_2(\Omega),
   \\
   \label{infsup2b}
   &\sup_{0\not=(v,\tau)\in\VV_2}
   \frac{\dual{\tu}{\tau}_\cS + \dual{\tsigma}{v}_\cS}{\|(v,\tau)\|_{\VV_2}}
   \gtrsim
   \|\tu\|_{2,\cS} + \|\tsigma\|_\trddiv{\cS}
   \quad\forall (\tu,\tsigma)\in \bHDtzz{\cS}\times\bH^{-3/2,-1/2}(\cS),
\end{align}
and the identity
\[
   H^2_0(\Omega)\times\HD{\Omega}
   =
   \{(v,\tau)\in\VV_2;\; -\dual{\tu}{\tau}_\cS + \dual{\tsigma}{v}_\cS = 0\
                         \forall (\tu,\tsigma)\in \bHDtzz{\cS}\times\bH^{-3/2,-1/2}(\cS)\}.
\]
This identity is true by Propositions~\ref{prop_tDt_jump} and~\ref{prop_gg_jump},
and \eqref{infsup1b} holds as we have seen with \eqref{B1_below}.
Finally, Propositions~\ref{prop_Dt_trace} and~\ref{prop_dd_trace} show that
\eqref{infsup2b} is satisfied.

\section{Numerical examples} \label{sec_num}

According to Theorems~\ref{thm_DPG1} and~\ref{thm_DPG2}, any conforming subspaces
$\UU_{1,h}\subset\UU_1$ and $\UU_{2,h}\subset\UU_2$ yield quasi-optimal approximations
$\uu_{1,h}\in\UU_{1,h}$ and $\uu_{2,h}\in\UU_{2,h}$, respectively, of the solution(s)
$\uu_1=(u,\Delta u,\traceD{}(u),\traceD{}(\Delta u))$ and
$\uu_2=(u,\Delta u,\traceDt{}(u),\traceDD{}(\diagtensor(\Delta u)))$
to \eqref{VF1} and \eqref{VF2}, respectively. (In fact, $\uu_1=\uu_2$.)
Here, $u\in H^2_0(\Omega)$ solves \eqref{prob},
and $\uu_{1,h}$ and $\uu_{2,h}$ are the solutions of \eqref{DPG1} and \eqref{DPG2}, respectively.

The construction of discrete subspaces of $\UU_1$ and $\UU_2$ and their approximation properties
is ongoing research. In the case of the Kirchhoff--Love model we have presented a fully discrete analysis
in \cite{FuehrerH_19_FDD}. Here, we only select some discrete spaces in an \emph{ad hoc} fashion and present
the corresponding convergence results without proving any convergence orders. Also, the construction
of appropriate Fortin operators (needed to take the approximation of optimal test functions into account)
is left open. Test functions are approximated by selecting identical meshes for ansatz and test
spaces, and increasing polynomial degrees in the test spaces (see~\cite{FuehrerH_19_FDD} for details).

Specifically, we consider the two-dimensional case $\di=2$, and
use regular triangular meshes $\cT$ of shape-regular elements, with mesh parameter
$h:=h_\cT := \max_{T\in\cT} \diam(T)$. The DPG method provides a built-in error estimator,
the residual norm $\eta:=\|B_i(\uu_i-\uu_{i,h})\|_{\VV_i'}$.
(We generically use $\eta$ and select $i=1$ or $i=2$ as needed.)
By the product form of the test spaces, $\eta$ is composed of local element contributions
\(
  \eta^2 = \sum_{T\in\cT} \eta(T)^2.
\)
For the case with singular solution we use these indicators to perform adaptive DPG schemes,
based on newest-vertex-bisection and D\"orfler marking with parameter of one half.

\subsection{Example with smooth solution}\label{sec_ex_smooth}

We take $\Omega = (0,1)^2$ and use the manufactured solution
$u(x,y)=x^2(1-x)^2y^2(1-y)^2$.

To compare the approximations given by the schemes \eqref{DPG1} and \eqref{DPG2},
we use piecewise constant functions on uniform meshes for $u_{i,h}$ and $\sigma_{i,h}$,
and traces of the reduced Hsieh--Clough--Tocher (HCT) functions
for both $\tu_{i,h}$ and $\tsigma_{i,h}$ ($i=1,2$). These HCT traces use piecewise cubic polynomials
for (standard) traces on edges and piecewise linear polynomials for normal derivatives on edges,
subject to the regularity of stemming from $H^2(\Omega)$-functions.
For the reduced HCT elements we refer to \cite{Ciarlet_78_IEE}, and the traces we use are described
in \cite{FuehrerHN_19_UFK}.

Figure~\ref{fig_smooth} presents the $L_2(\Omega)$ approximation errors for $u$ and $\sigma=\Delta u$
along with the corresponding residual $\eta$. The results for scheme \eqref{DPG1} are on the left
and for \eqref{DPG2} on the right. It appears that in both cases we have
an asymptotical behavior of $\|u-u_{i,h}\|\simeq\|\sigma-\sigma_{i,h}\|\simeq\eta=\OO(h)$.
This is expected for lowest order approximations of a smooth function.

\begin{figure}[htb]
  \begin{center}
    \includegraphics[width=0.45\textwidth]{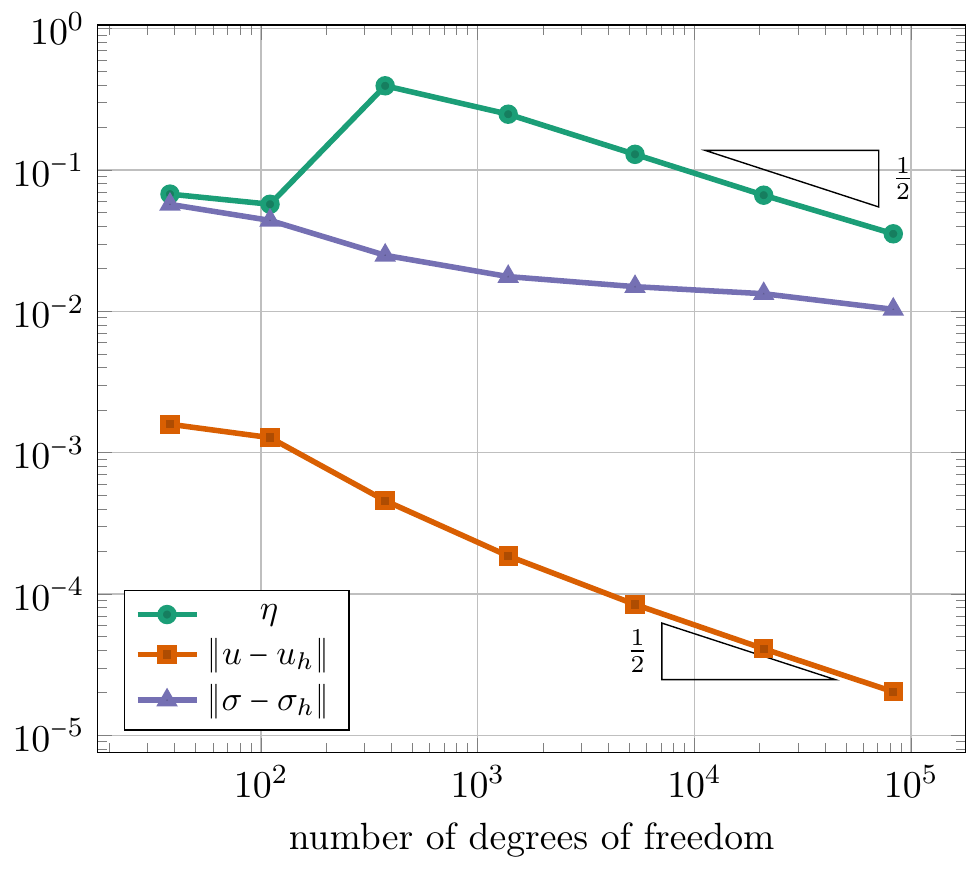}
    \includegraphics[width=0.45\textwidth]{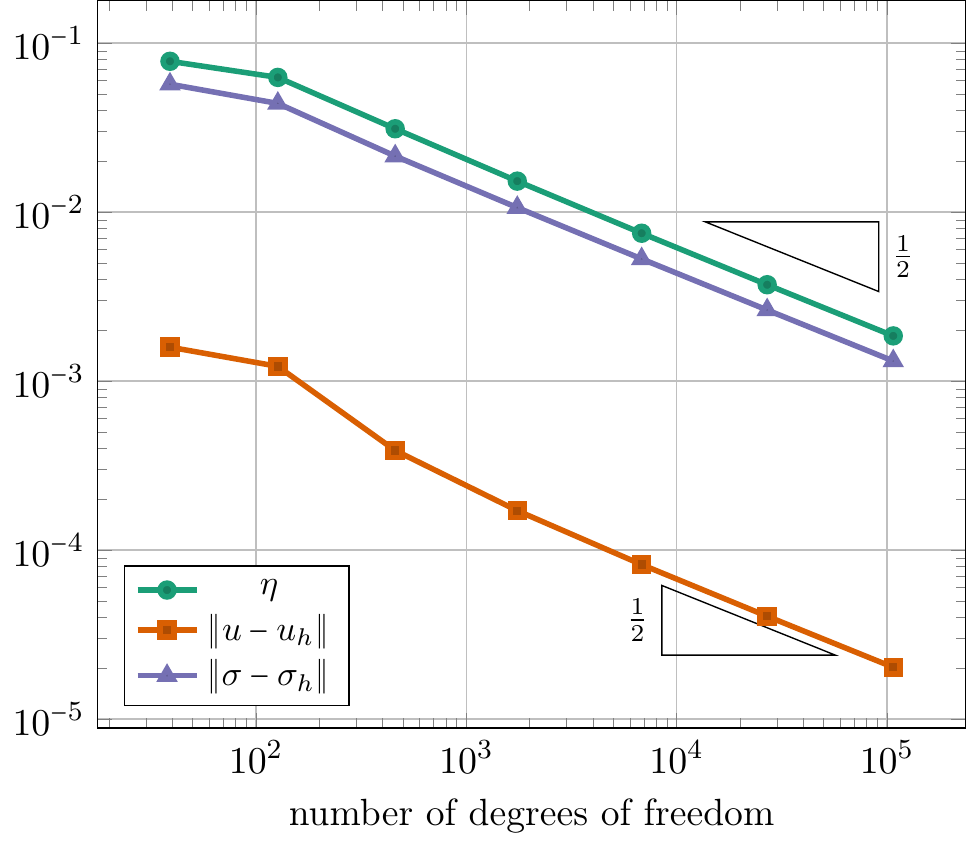}
  \end{center}
  \caption{Errors generated by schemes \eqref{DPG1} (left) and scheme \eqref{DPG2} (right)
          for the smooth example from \S\ref{sec_ex_smooth}.}
  \label{fig_smooth}
\end{figure}

\subsection{Example with singular solution}\label{sec_ex_sing}

The next example is taken from \cite{FuehrerHN_19_UFK}.
We consider the non-convex domain from Figure~\ref{fig_domain} with
reentrant corner at $(0,0)$. The outer angle at this corner is $\tfrac3{4}\pi$.
We take the manufactured solution
\begin{align*}
  u(r,\varphi) = r^{1+\alpha}(\cos( (\alpha+1)\varphi)+C \cos( (\alpha-1)\varphi))
\end{align*}
with polar coordinates $(r,\varphi)$ centered at the origin. It holds
\(
  \Delta^2 u = 0 =: f.
\)
For the boundary conditions we prescribe the values of $u|_\Gamma$ and $\nabla u|_\Gamma$.
The parameters $\alpha$ and $C$ are chosen such that $u$ and its normal derivative vanish on the boundary edges
that meet at the origin.
Here, we have $\alpha\approx 0.673583432147380$ and $C\approx 1.234587795273723$.
It holds $u\in H^{2+\alpha-\varepsilon}(\Omega)$ but $\Delta u\not\in H^1(\Omega)$.

\begin{figure}[htb]
  \begin{center}
    \includegraphics[width=0.4\textwidth]{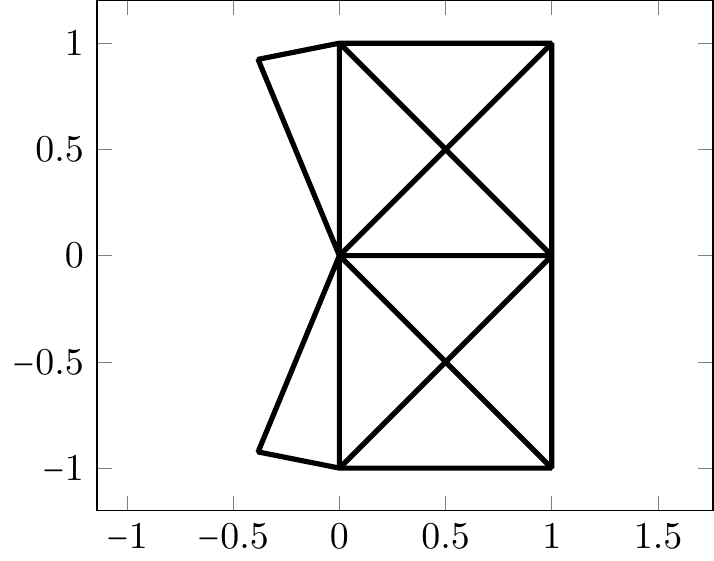}
  \end{center}
  \caption{The non-convex domain with initial mesh.}
  \label{fig_domain}
\end{figure}

The numerical results for the two schemes \eqref{DPG1} (on the left) and \eqref{DPG2} (on the right) are shown
in Figure~\ref{fig_sing}. As before, we plot the $L_2(\Omega)$-errors for $u$ and $\sigma=\Delta u$
along with the corresponding residual $\eta$.
In both cases the schemes converge at a low rate when using quasi-uniform meshes (curves without label ``adap''),
variant \eqref{DPG1} being extremely slow.
The rates exhibited by the second scheme are as expected by the regularity of $\sigma$.
However, scheme \eqref{DPG1} seems to suffer from the approximation
of $\tsigma_h$ by smooth $H^2$-traces. This is clearly not an efficient basis.
We can only claim convergence based on a density argument.

We have also used adaptive variants of both DPG schemes (curves with label ``adap'' in the same figures).
It turns out that the second scheme \eqref{DPG2} recovers its optimal rate of $\OO(\dim(\UU_{2,h})^{-1/2})$.
On the other hand, the residual $\eta$ and error $\|\sigma-\sigma_{1,h}\|$ of the first scheme
converge as slowly as before. Again, this seems to be caused by the inappropriate basis for $\tsigma_{1,h}$.
It is an open problem to construct discrete trace spaces that improve the convergence rate of scheme
\eqref{DPG1} for non-smooth solutions.

\begin{figure}[htb]
  \begin{center}
    \includegraphics[width=0.45\textwidth]{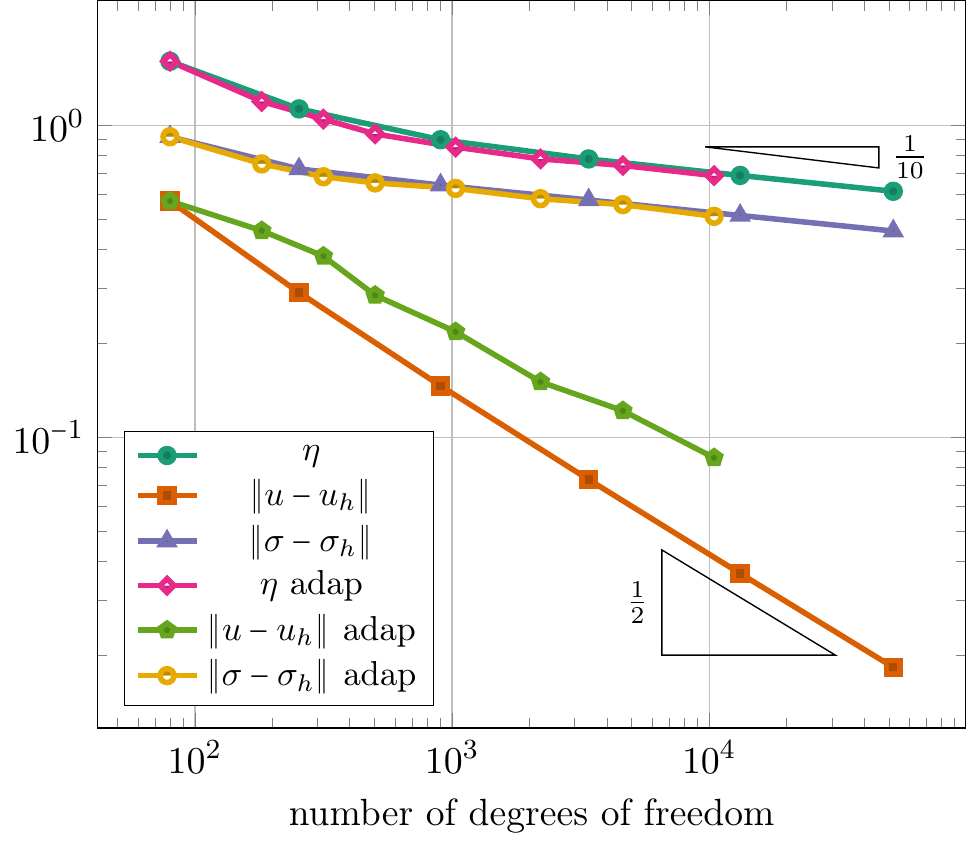}
    \includegraphics[width=0.45\textwidth]{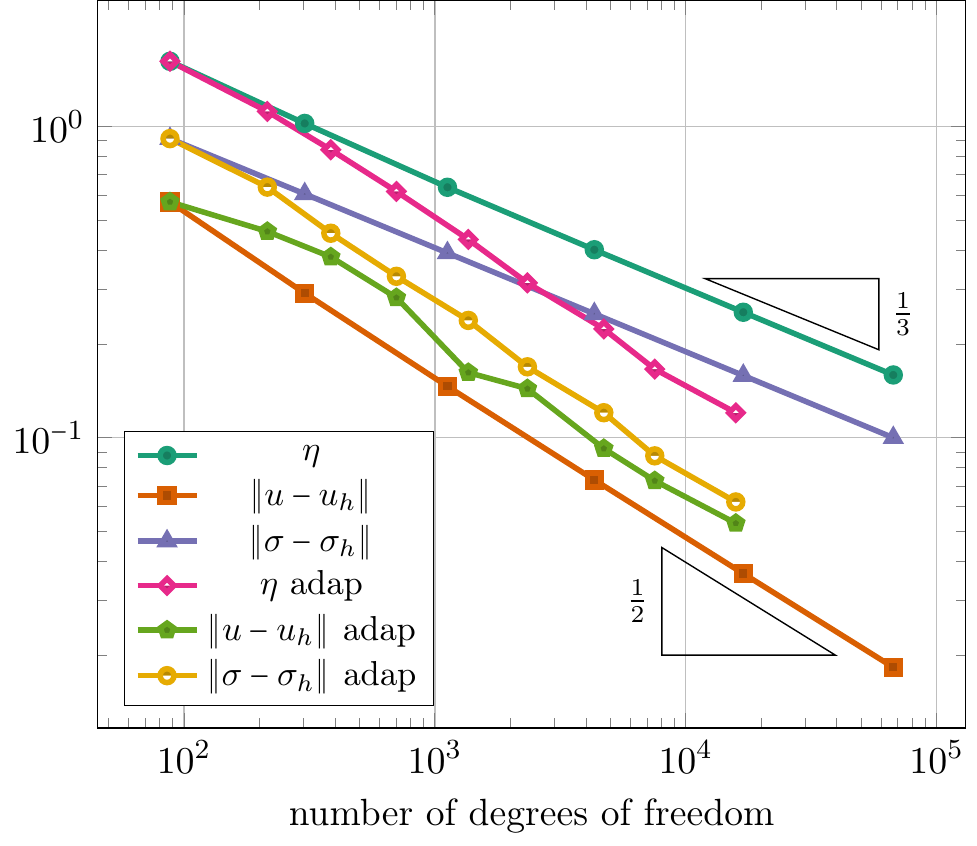}
  \end{center}
  \caption{Errors generated by schemes \eqref{DPG1} (left) and \eqref{DPG2} (right)
           for the singular example from \S\ref{sec_ex_sing}.}
  \label{fig_sing}
\end{figure}

\clearpage
\appendix
\section{The space $\tracetD{}(\HD{\Omega})$ is not closed in $H^2(\cT)'$.} \label{sec_app1}

We give an example for the two-dimensional case and conjecture that the result is true
in arbitrary space dimension $\di\ge 2$, replacing the Dirac delta below by distributions supported
on $(\di-1)$-simplexes.

We consider a domain in $\R^2$ with a vertex, for instance the open triangle $\Omega=T$
with vertices $(0,0)$, $(1,0)$, $(0,1)$. The construction that follows also works for
a Lipschitz domain where part of the boundary (connected, non-zero relative measure) is $C^2$,
considering an interior point of this smooth part instead of a vertex.

Let us recall the definition of the trace operator $\tracetD{T}$, cf.~\eqref{trtDT},
\[
   \tracetD{T}:\;
   \left\{\begin{array}{cll}
      \HD{T} & \to & H^2(T)',\\
      v & \mapsto & \tracetD{T}(v):\; z\mapsto \vdual{\Delta z}{v}_T-\vdual{z}{\Delta v}_T
   \end{array}\right..
\]
To show that $\tracetD{T}(\HD{T})$ is not closed we construct a sequence of smooth functions
$(v_\varepsilon)_\varepsilon$ (e.g., $\varepsilon=1/n$ with positive integer $n$) so that the corresponding
trace sequence $(\tracetD{T}(v_\varepsilon))_\varepsilon\subset\tracetD{T}(\HD{T})$ converges in $H^2(T)'$ to
the Dirac distribution at $(0,0)$. This distribution is an element of
the trace space $\bH^{-3/2,-1/2}(\partial T)=\traceDD{T}(\HdDiv{T})$, cf.~\cite{FuehrerHN_19_UFK},
but it is not the trace of an $\HD{T}$-function, cf.~Lemma~\ref{la_trtD} and Appendix~\ref{sec_app2}.

We remark that this construction does not contradict the closedness of the trace space
$\bHD{\partial T}=\traceD{T}(\HD{T})\subset \HD{T}'$ proved by Lemma~\ref{la_tr_unity}.
Indeed, Dirac distributions at boundary points $e$ are not elements of
$ \HD{T}'$ since, e.g., $w=\log|\cdot,e|$ satisfies
$w\in\HD{T}$ (because $\Delta w=0$), but its value at $e$ is not controlled.

We start by considering the mollifier type functions
\begin{align*}
  \phi_\varepsilon(t) := \begin{cases}
    C\frac1{\varepsilon} e^{-\varepsilon^2/(\varepsilon^2-t^2)},  & t\in[0,\varepsilon), \\
    0, & \text{else},
  \end{cases}
\end{align*}
where $C>0$ is chosen such that $\int_0^1 \phi_\varepsilon(t)\,dt = 1/2$.
Note that $C$ is independent of $\varepsilon$.

In the following let us denote $I=(0,1)$ and $I_\varepsilon=(0,\varepsilon)$.
We need two technical results.

\begin{lemma} \label{lem_distributionZero} We have that
  \begin{align*}
    \|t\mapsto t\phi_\varepsilon(t)\|_{I} \to 0 \quad\text{as}\quad \varepsilon\to 0.
  \end{align*}
  (Here and in the following, $\|\cdot\|_I$ denotes the $L_2(I)$-norm.)
\end{lemma}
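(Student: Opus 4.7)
The plan is to exploit the pointwise bound on $\phi_\varepsilon$ together with the smallness of its support. First I would observe that for $t\in[0,\varepsilon)$ the exponent $-\varepsilon^2/(\varepsilon^2-t^2)$ satisfies $\varepsilon^2/(\varepsilon^2-t^2)\ge 1$, so that
\[
   0 \le \phi_\varepsilon(t) \le \frac{C}{\varepsilon}e^{-1}
   \qquad \text{for all } t\in I_\varepsilon=(0,\varepsilon),
\]
while $\phi_\varepsilon$ vanishes outside $I_\varepsilon$.

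Next, since $t\le\varepsilon$ on the support, multiplication by $t$ gives a bound that is uniform in $\varepsilon$:
\[
   |t\,\phi_\varepsilon(t)| \le \varepsilon\cdot\frac{C}{\varepsilon e} = \frac{C}{e}
   \qquad (t\in I_\varepsilon),
\]
and zero elsewhere. Squaring and integrating over $I$ then yields
\[
   \|t\mapsto t\phi_\varepsilon(t)\|_I^2
   = \int_0^\varepsilon (t\phi_\varepsilon(t))^2\,dt
   \le \frac{C^2}{e^2}\,\varepsilon,
\]
which tends to $0$ as $\varepsilon\to 0$.

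The argument is entirely routine; there is no real obstacle. The only point that might be worth a sentence of comment is the uniform bound on the exponential factor, which is what allows the $1/\varepsilon$ blow-up of the prefactor to be absorbed by the factor $t\le\varepsilon$ and still leave a net decay of order $\sqrt\varepsilon$ from the measure of the support.
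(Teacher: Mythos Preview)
Your proof is correct and essentially identical to the paper's. Both use the pointwise bound $\phi_\varepsilon(t)\le C e^{-1}/\varepsilon$ (the maximum at $t=0$) together with the support length $\varepsilon$; the paper keeps the factor $t^2$ in the integral and evaluates $\int_0^\varepsilon t^2\,dt$, while you first bound $t\le\varepsilon$, but the resulting estimate $\|t\phi_\varepsilon\|_I^2=\OO(\varepsilon)$ is the same.
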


\begin{proof}
  Since $\phi_\varepsilon(t)$ takes its maximum at $t=0$ we can bound
  \begin{align*}
    \|t\mapsto t\phi_\varepsilon(t)\|_{I}^2
    \leq C^2e^{-2} \frac1{\varepsilon^2} \int_0^\varepsilon t^2 \,dt 
    = \OO(\varepsilon).
  \end{align*}
\end{proof}

\begin{lemma} \label{lem_embedding}
  Let $\varepsilon>0$ and $v\in H^1(I_\varepsilon)$ with $v(0) = 0$. It holds the bound
  \begin{align*}
    \|v\|_{L_\infty(I_\varepsilon)} \lesssim \varepsilon^{1/2}\|v'\|_{I_\varepsilon}
  \end{align*}
  with hidden constant independent of $\varepsilon$ and $v$.
\end{lemma}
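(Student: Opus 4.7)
The plan is to use the fundamental theorem of calculus together with the Cauchy--Schwarz inequality, exploiting that in one dimension $H^1(I_\varepsilon)$ embeds continuously into $C(\overline{I_\varepsilon})$, so that the pointwise value $v(0)=0$ makes sense and $v$ admits the representation $v(t)=\int_0^t v'(s)\,ds$ for every $t\in I_\varepsilon$.

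Given this representation, for any fixed $t\in I_\varepsilon$ I would estimate
\begin{align*}
   |v(t)| = \Bigl|\int_0^t v'(s)\,ds\Bigr|
   \le \int_0^t |v'(s)|\,ds
   \le t^{1/2}\Bigl(\int_0^t |v'(s)|^2\,ds\Bigr)^{1/2}
   \le \varepsilon^{1/2}\|v'\|_{I_\varepsilon},
\end{align*}
where the second inequality is Cauchy--Schwarz applied to $1\cdot |v'|$ on $(0,t)$, and the third uses $t<\varepsilon$ and monotonicity of the integral. Taking the essential supremum over $t\in I_\varepsilon$ then yields the claimed bound with absolute constant $1$, in particular independent of $\varepsilon$ and $v$.

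There is no real obstacle here; the only point to be slightly careful about is the justification of the identity $v(t)=\int_0^t v'(s)\,ds$ for an $H^1$-function, which follows from the one-dimensional Sobolev embedding $H^1(I_\varepsilon)\hookrightarrow C(\overline{I_\varepsilon})$ (so $v(0)$ is well defined) together with the fact that $H^1$-functions are absolutely continuous on intervals. Once this is granted, the estimate is a one-line computation, so the lemma essentially reduces to a direct application of the fundamental theorem of calculus and Cauchy--Schwarz.
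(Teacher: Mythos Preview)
Your proof is correct. The paper's own proof is more indirect: it invokes the embedding $H^1(I)\hookrightarrow C^0(\bar I)$ on the unit interval, a Poincar\'e inequality, and then a scaling argument to transfer the bound to $I_\varepsilon$. Your approach bypasses the scaling step entirely by working directly on $I_\varepsilon$ via the fundamental theorem of calculus and Cauchy--Schwarz, which has the added benefit of producing the explicit constant $1$. Both routes rest on the same one-dimensional fact (absolute continuity of $H^1$-functions), so the difference is one of presentation rather than substance; yours is simply the more elementary and self-contained version.
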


\begin{proof}
The statement follows in the standard way by using
the continuous embedding $H^1(I)\hookrightarrow C^0(\bar{I})$, a Poincar\'e inequality, and
scaling arguments.
\end{proof}

Now, using $\phi_\varepsilon$, we define $v_\varepsilon\in C^\infty(T)$ by
\begin{align*}
  v_\varepsilon(x,y) := -(x+y) \phi_\varepsilon(|(x,y)|) = -(x+y) \begin{cases}
    C\frac1{\varepsilon} e^{-\varepsilon^2/(\varepsilon^2-x^2-y^2)},  & |(x,y)| < \varepsilon, \\
    0, & \text{else}.
  \end{cases}
\end{align*}

\begin{theorem}
  Let $\delta\in \bH^{-3/2,-1/2}(\partial T)\subset H^2(T)'$ denote the Dirac distribution supported at $(0,0)$,
that is, $\dual{\delta}{z}_{\partial T} = z(0,0)$ for $z\in H^2(T)$. It holds
\begin{align*}
    \tracetD{T}(v_\varepsilon) \to \delta \quad(\varepsilon\to 0)\quad\text{in}\quad H^2(T)'.
\end{align*}
\end{theorem}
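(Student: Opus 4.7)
The strategy is to use the smoothness and small support of $v_\varepsilon$ to reduce the duality $\dual{\tracetD{T}(v_\varepsilon)}{z}_{\partial T}$ to four elementary scalar integrals on the two edges of $\partial T$ meeting at the origin, and then to show that, uniformly in $z$ with $\|z\|_{2,T}\le 1$, two of them converge to $\tfrac12 z(0,0)$ each while the remaining two vanish as $\varepsilon\to 0$.

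First, I would observe that $v_\varepsilon\in C^\infty_c(T)$ (it is supported in $B_\varepsilon(0)\cap T$ since $\phi_\varepsilon$ vanishes outside $[0,\varepsilon)$ and extends smoothly through the origin as a function of $x^2+y^2$), in particular $v_\varepsilon\in\HD{T}$. Using the classical identity \eqref{trDT_classical} (Green's second identity applied to the smooth function $v_\varepsilon$ against $z\in H^2(T)$), I can write
\[
   \dual{\tracetD{T}(v_\varepsilon)}{z}_{\partial T}
   = \int_{\partial T}\bigl(z\,\partial_\nn v_\varepsilon - v_\varepsilon\,\partial_\nn z\bigr)\,ds.
\]
Since $\mathrm{supp}\,v_\varepsilon\subset B_\varepsilon(0)$, only the two axis edges of $\partial T$ contribute. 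On $\{y=0,\,0<x<\varepsilon\}$, where $\partial_\nn=-\partial_y$, a direct calculation yields $v_\varepsilon(x,0)=-x\phi_\varepsilon(x)$ and $\partial_\nn v_\varepsilon(x,0)=\phi_\varepsilon(x)$, and analogous formulas hold on $\{x=0,\,0<y<\varepsilon\}$. Hence
\[
   \dual{\tracetD{T}(v_\varepsilon)}{z}_{\partial T}
   = \sum_{\mathrm{edge}}\!\!\int_0^\varepsilon z(\cdot)\phi_\varepsilon(\cdot)\,dt
   \;+\; \sum_{\mathrm{edge}}\!\!\int_0^\varepsilon t\,\phi_\varepsilon(t)\,\partial_\nn z(\cdot)\,dt.
\]

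Second, I would study each family of integrals. For the first family, using $\int_0^\varepsilon\phi_\varepsilon\,dt=\tfrac12$ I split
\[
   \int_0^\varepsilon z(x,0)\phi_\varepsilon(x)\,dx
   = \tfrac12 z(0,0) + \int_0^\varepsilon\bigl(z(x,0)-z(0,0)\bigr)\phi_\varepsilon(x)\,dx,
\]
and control the remainder by $|z(x,0)-z(0,0)|\le x^{1/2}\|\partial_x z(\cdot,0)\|_{L^2(I_\varepsilon)}$ (Cauchy--Schwarz), the trace inequality $\|\partial_x z(\cdot,0)\|_{L^2(I)}\lesssim\|z\|_{2,T}$ (since $\partial_x z\in H^1(T)$), and the elementary bound $\int_0^\varepsilon x^{1/2}\phi_\varepsilon(x)\,dx\le\tfrac12\varepsilon^{1/2}$. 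This gives a remainder of order $\varepsilon^{1/2}\|z\|_{2,T}$. Combined with the analogous edge contribution, the first family converges to $z(0,0)=\dual{\delta}{z}_{\partial T}$ uniformly on the unit ball of $H^2(T)$. For the second family, Cauchy--Schwarz yields
\[
   \Bigl|\int_0^\varepsilon t\,\phi_\varepsilon(t)\,\partial_\nn z(\cdot)\,dt\Bigr|
   \le \|t\mapsto t\phi_\varepsilon(t)\|_{L^2(I_\varepsilon)}\,\|\partial_\nn z\|_{L^2(\text{edge})},
\]
and the first factor tends to $0$ by Lemma~\ref{lem_distributionZero} while the second is uniformly bounded by $\|z\|_{2,T}$ via the trace theorem.

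Combining these bounds yields $\|\tracetD{T}(v_\varepsilon)-\delta\|_{H^2(T)'}=O(\varepsilon^{1/2})$, which is the claimed operator-norm convergence. The main obstacle I anticipate is not the limit itself (which is essentially an approximation-to-the-identity argument, since $\phi_\varepsilon$ restricted to half a neighbourhood acts as a half-mass Dirac) but rather ensuring the error estimates are \emph{uniform in $z$ with $\|z\|_{2,T}\le 1$}, so as to upgrade pointwise convergence on $H^2(T)$ to convergence in $H^2(T)'$. This uniformity is secured by the two trace/embedding facts $H^2(T)\hookrightarrow C^0(\overline T)$ (used to make $z(0,0)$ meaningful and continuous in $z$) and $\partial^\alpha z|_{\partial T}\in L^2(\partial T)$ with $\|\partial^\alpha z|_{\partial T}\|_{L^2}\lesssim\|z\|_{2,T}$ for $|\alpha|\le 1$.
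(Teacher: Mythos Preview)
Your proposal is correct and follows essentially the same approach as the paper: reduce to the two axis edges using the small support and smoothness of $v_\varepsilon$, split each $\int_0^\varepsilon z\,\phi_\varepsilon$ into $\tfrac12 z(0,0)$ plus an $O(\varepsilon^{1/2})\|z\|_{2,T}$ remainder via Cauchy--Schwarz on the edge trace of $\partial_x z$, and kill the $\int t\phi_\varepsilon\,\partial_\nn z$ terms with Lemma~\ref{lem_distributionZero} and the trace inequality. The only cosmetic difference is that the paper applies H\"older $L_1$--$L_\infty$ and then Lemma~\ref{lem_embedding}, whereas you use the equivalent pointwise bound $|z(x,0)-z(0,0)|\le x^{1/2}\|\partial_x z(\cdot,0)\|_{L_2(I_\varepsilon)}$ before integrating; also note that $v_\varepsilon\in C^\infty(\overline T)$ rather than $C^\infty_c(T)$, since its support meets $\partial T$ near the origin.
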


\begin{proof}
Since $v_\varepsilon$ is smooth we can represent its trace as
\[
   \dual{\tracetD{T}(v_\varepsilon)}{z}_{\partial T}
   =
   \dual{\partial_\nn z}{v_\varepsilon}_{\partial T}-\dual{z}{\partial_\nn v_\varepsilon}_{\partial T}
   \quad\forall z\in H^2(T).
\]
To obtain a representation of $\partial_\nn v_\varepsilon$ we note that for $\varepsilon<1/2$,
$v_\varepsilon$ and its derivatives vanish on the edge spanned by the nodes $(1,0)$, $(0,1)$.
Second, we have that
\begin{align*}
  \nabla v_\varepsilon(x,y) = \frac{C}\varepsilon e^{-\varepsilon^2/(\varepsilon^2-x^2-y^2)} 
  \begin{pmatrix} -1 \\ -1 \end{pmatrix} + \frac{C(x+y)}{\varepsilon} e^{-\varepsilon^2/(\varepsilon^2-x^2-y^2)}
  \frac{\varepsilon^2}{(\varepsilon^2-x^2-y^2)^2} \begin{pmatrix} 2x \\ 2y \end{pmatrix}.
\end{align*}
Let $E:= (0,1)\times\{0\}$. Then, $\nn_E = (0,-1)^\top$ and
\begin{align} \label{eq_dnurepres}
  \partial_{\nn_E} v_\varepsilon|_E (t) = \nn_E\cdot \nabla v_\varepsilon(t,0) 
  = \frac{C}{\varepsilon} e^{-\varepsilon^2/(\varepsilon^2-t^2)} = \phi_\varepsilon(t).
\end{align}
Here, $t=x$ is the (local) arc length of $E$ starting at $(0,0)$.
Similarly, we calculate $\partial_{\nn_{E'}} v_\varepsilon|_{E'}(t)=\phi^-_\varepsilon(t):=\phi_\varepsilon(1-t)$
where $t=1-y$ is the arc length of $E'$ starting at $(0,1)$.

Now, for $z\in H^2(T)$, we find that
  \begin{align*}
    \dual{\tracetD{T}(v_\varepsilon)}{z}_{\partial T}
    = \dual{v_\varepsilon}{\partial_\nn z}_{\partial T} - \dual{\partial_\nn v_\varepsilon}{z}_{\partial T}
    = \dual{v_\varepsilon}{\partial_\nn z}_E + \dual{v_\varepsilon}{\partial_\nn z}_{E'}
    - \dual{\phi_\varepsilon}{z}_E - \dual{\phi^-_\varepsilon}{z}_{E'}
  \end{align*}
with $L_2(E)$-duality $\dual{\cdot}{\cdot}_E$, and correspondingly for $E'$.
  Note that 
  \begin{align*}
    \dual{v_\varepsilon}{\partial_\nn z}_E
    \leq \|v_\varepsilon\|_E \|\grad z\|_E
    \lesssim \|v_\varepsilon\|_E \norm{z}{2,T}
    = \|t\mapsto t\phi_\varepsilon(t)\|_I \norm{z}{2,T}.
  \end{align*}
  We obtain the very same estimate replacing $E$ by $E'$.
  Lemma~\ref{lem_distributionZero} then proves that
  \begin{align}\label{eq_estueps}
    \sup_{0\not=z\in H^2(T)} \frac{\dual{v_\varepsilon}{\partial_\nn z}}{\norm{z}{2,T}} \lesssim
    \|t\mapsto t\phi_\varepsilon(t)\|_I \to 0 \quad (\varepsilon \to 0).
  \end{align}
  To rewrite and estimate the term $\dual{\partial_\nn v_\varepsilon}{z}_E$ we use the
  representation~\eqref{eq_dnurepres} and the fact that $\int_0^1 \phi_\varepsilon(t)\,dt = 1/2$.
  This shows that
  \begin{align*}
    \dual{\partial_\nn v_\varepsilon}{z}_E = \dual{\phi_\varepsilon}{z(\cdot,0)}_I = 
    \frac12 z(0,0) + \dual{\phi_\varepsilon}{z(\cdot,0)-z(0,0)}_I
    \quad (z\in H^2(T)).
  \end{align*}
  Analogously it holds
  \begin{align*}
    \dual{\partial_\nn v_\varepsilon}{z}_{E'} = \dual{\phi^-_\varepsilon}{z(0,1-\cdot)}_I = 
    \frac12 z(0,0) + \dual{\phi^-_\varepsilon}{z(0,1-\cdot)-z(0,0)}_I
    \quad (z\in H^2(T)).
  \end{align*}
  The last term in the latter two estimates can be estimated as follows (we only consider the first one).
  Since $\mathrm{supp}(\phi_\varepsilon|_I) = [0,\varepsilon]$, 
  Lemma~\ref{lem_embedding} and a trace inequality show that
  \begin{align*}
    |\dual{\phi_\varepsilon}{z(\cdot,0)-z(0,0)}_I|
    &\leq
    \|\phi_\varepsilon\|_{L_1(I_\varepsilon)} \|z(\cdot,0)-z(0,0)\|_{L_\infty(I_\varepsilon)}
    \\ 
    &\lesssim \varepsilon^{1/2} \|\partial_x z\|_{I_\varepsilon} 
    \leq \varepsilon^{1/2} \|\partial_x z\|_I \lesssim \varepsilon^{1/2} \|z\|_{2,T}.
  \end{align*}
  Now, using the delta distribution $\dual{\delta}{z}_{\partial T}=z(0,0)$, we therefore obtain
  for any $z\in H^2(T)$
  \begin{align*}
    |\dual{\delta}{z}_{\partial T}-\dual{\partial_\nn v_\varepsilon}z_{\partial T}| 
    &= |\dual{\phi_\varepsilon}{z(\cdot,0)-z(0,0)}_I +
    \dual{\phi^-_\varepsilon}{z(0,1-\cdot)-z(0,0)}_I|
    \lesssim \varepsilon^{1/2}\|z\|_{2,T}.
  \end{align*}
  This bound, together with~\eqref{eq_estueps}, shows that
  \begin{align*}
    \sup_{0\not=z\in H^2(T)} \frac{\dual{\delta-\tracetD{T}(v_\varepsilon)}z_{\partial T}}{\|z\|_{2,T}}
    \leq
    \sup_{0\not=z\in H^2(T)}
    \frac{\dual{\delta}z_{\partial T} - \dual{\partial_\nn v_\varepsilon}z_{\partial T}}{\|z\|_{2,T}}
    +
    \sup_{0\not=z\in H^2(T)}
    \frac{\dual{v_\varepsilon}{\partial_\nn z}_{\partial T}}{\|z\|_{2,T}} \to 0
  \end{align*}
  when $\varepsilon\to 0$. This finishes the proof.
\end{proof}

\section{The Dirac mass is not an element of $\tracetD{}(\HD{\Omega})$.} \label{sec_app2}

  The following argument is essentially the observation that fundamental solutions to the Laplacian
  (in any space dimension $\ge 2$) are not bounded. For illustration we show details for the case $\di=2$.
  Without loss of generality we assume that $\Omega$ is the upper half of a circle with center
  $x_0 = (0,0)$ and radius 1. (A smoothness of the boundary apart from Lipschitz continuity is not
  needed in our construction.)
  We define the points $x_n = \nn(x_0)\tfrac1n = (0,-\tfrac1n) \notin\overline\Omega$ and consider
  the sequence
  \begin{align*}
    (v_n)_n\quad\text{with}\quad
    v_n := \log|x_n-\cdot| \in H^2(\Omega).
  \end{align*}
  Since this sequence converges pointwise in $\Omega$ to $v:=\log|\cdot|$ and is bounded in $L_2(\Omega)$,
  it converges in $L_2(\Omega)$ to $v$. It also converges in $\HD{\Omega}$ to $v$ because
  $\Delta v_n=\Delta v=0$ in $\Omega$.

  Now we argue by contradiction. Suppose there exists $\sigma\in\HD{\Omega}$ with
  $\tracetD{\Omega}(\sigma) = \delta_{x_0}$ (the Dirac delta supported at $x_0$), i.e.,
  $\dual{\tracetD{\Omega}(\sigma)}{z}_\Gamma = z(x_0)$ for all $z\in H^2(\Omega)$.
  
  Since $\sigma, v\in\HD{\Omega}$ the value $\dual{\traceD{\Omega}(\sigma)}v_\Gamma$ has to be finite.
  Moreover, since $v_n\to v$ in $\HD{\Omega}$ as $n\to\infty$,
  $\dual{\traceD{\Omega}(\sigma)}{v_n}_\Gamma \to \dual{\traceD{\Omega}(\sigma)}{v}_\Gamma < \infty$.
  However, since $v_n\in H^2(\Omega)$ we conclude that
  \begin{align*}
    \dual{\traceD{\Omega}(\sigma)}{v_n}_\Gamma
    =
    \dual{\tracetD{\Omega}(\sigma)}{v_n}_\Gamma = v_n(x_0) \to \infty \text{ as } n\to \infty.
  \end{align*}
  This contradicts $\dual{\traceD{\Omega}(\sigma)}{v}_\Gamma < \infty$.

\section{On the regularity of solutions to the bi-Laplace problem.} \label{sec_reg}

We note a regularity result for the bi-Laplacian. Generally, a solution $u$ to problem \eqref{prob}
is a priorily sought in $H^2_0(\Omega)$. In this respect, \eqref{eq_Dt} is a fundamental relation
to show the ellipticity (coercivity) of the induced bilinear form $\vdual{\Delta\cdot}{\Delta\cdot}$.
On the other hand, for a right-hand side function $f\in L_2(\Omega)$, the proof of Theorem~\ref{thm_stab1}
shows that this regularity is automatically satisfied.

\begin{theorem} \label{thm_ref}
Let $u\in\HDz{\Omega}$ with $\Delta u\in\HD{\Omega}$. It holds $u\in H^2_0(\Omega)$ so that,
in particular, $\|\Delta u\|=\|\Grad\grad u\|$.
\end{theorem}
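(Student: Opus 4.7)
The plan is to deduce the regularity statement from the well-posedness of the ultraweak formulation \eqref{VF1} already established in Theorem~\ref{thm_stab1}. The basic idea is that the hypotheses on $u$ are precisely what is needed to realise $u$ as the first component of a solution tuple to \eqref{VF1} with data $f:=\Delta^2 u\in L_2(\Omega)$; uniqueness then forces $u$ to coincide with the classical $H^2_0(\Omega)$-solution.

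First I would set $f:=\Delta^2 u$, which belongs to $L_2(\Omega)$ since $\Delta u\in\HD{\Omega}$ by assumption. Define the candidate tuple
\[
  \uu := \bigl(u,\, \Delta u,\, \traceD{}(u),\, \traceD{}(\Delta u)\bigr).
\]
The two trace components are well defined: $u\in\HDz{\Omega}$ gives $\traceD{}(u)\in\bHDzz{\cS}$ and $\Delta u\in\HD{\Omega}$ gives $\traceD{}(\Delta u)\in\bHD{\cS}$, so $\uu\in\UU_1$. A direct calculation — essentially the definition \eqref{trDT} of $\traceD{T}$ applied on each element and then summed via \eqref{trD_duality} — verifies that $\uu$ solves \eqref{VF1} with right-hand side $L(v,\tau)=\vdual{f}{v}$, since for every $(v,\tau)\in\VV_1$ and every $T\in\cT$ one has $\vdual{u}{\Delta\tau}_T-\dual{\traceD{T}(u)}{\tau}_{\partial T}=\vdual{\Delta u}{\tau}_T$ and $\vdual{\Delta u}{\Delta v}_T-\dual{\traceD{T}(\Delta u)}{v}_{\partial T}=\vdual{\Delta^2 u}{v}_T=\vdual{f}{v}_T$.

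Independently, the classical theory of the clamped plate problem (Lax--Milgram on $H^2_0(\Omega)$ applied to the bilinear form $\vdual{\Delta\cdot}{\Delta\cdot}$, which is coercive by \eqref{eq_Dt} and the Poincar\'e inequality) produces a solution $\tilde u\in H^2_0(\Omega)$ of $\Delta^2\tilde u=f$. Since $H^2_0(\Omega)\subset\HDz{\Omega}$ and $\Delta\tilde u\in\HD{\Omega}$, the corresponding tuple $\tilde\uu:=(\tilde u,\Delta\tilde u,\traceD{}(\tilde u),\traceD{}(\Delta\tilde u))$ lies in $\UU_1$ and, by exactly the same integration-by-parts computation, solves \eqref{VF1} with the same datum $f$. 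By the uniqueness part of Theorem~\ref{thm_stab1}, $\uu=\tilde\uu$; in particular $u=\tilde u\in H^2_0(\Omega)$. The identity $\|\Delta u\|=\|\Grad\grad u\|$ is then just \eqref{eq_Dt}.

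The only point requiring care — and the potential source of circularity — is the invocation of a classical $H^2_0(\Omega)$-solvability theorem for $\Delta^2\tilde u=f$ to produce the ``reference'' tuple $\tilde\uu$. This is harmless because that result is established on the unbroken space $H^2_0(\Omega)$ via Lax--Milgram, independently of the broken setting used for Theorem~\ref{thm_stab1}; Theorem~\ref{thm_stab1} is then used only to identify our low-regularity tuple with the classical one. Alternatively, one can argue without an external citation by noting that the equivalence argument already contained in the proof of Theorem~\ref{thm_stab1} shows directly that the $u$-component of the unique solution of \eqref{VF1} must lie in $H^2_0(\Omega)$, and the present theorem is simply the extraction of that statement into a stand-alone regularity result.
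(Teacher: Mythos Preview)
Your proof is correct and follows essentially the same route as the paper's. The paper's proof is extremely brief---it just sets $f:=\Delta^2 u$, invokes the proof of Theorem~\ref{thm_stab1} to conclude $u\in H^2_0(\Omega)$, and cites \eqref{eq_Dt}---and your write-up simply unpacks that reference: you build the tuple $\uu\in\UU_1$, compare it to the tuple coming from the classical $H^2_0(\Omega)$-solution, and use uniqueness in \eqref{VF1}, which is exactly the mechanism hidden in the equivalence paragraph of Theorem~\ref{thm_stab1}'s proof.
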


\begin{proof}
By assumption, $u\in\HDz{\Omega}$ satisfies $f:=\Delta^2 u\in L_2(\Omega)$. The proof of Theorem~\ref{thm_stab1}
shows that this problem has a unique solution $u\in H^2_0(\Omega)$. Then,
$\|\Delta u\|=\|\Grad\grad u\|$ holds by \eqref{eq_Dt}.
\end{proof}

\bibliographystyle{siam}
\bibliography{/home/norbert/tex/bib/heuer,/home/norbert/tex/bib/bib}
\end{document}